\documentclass{amsart}
\usepackage{amssymb, amsthm, amsmath, amsfonts,amscd,bm,fancyhdr}
\usepackage{graphics}
\usepackage{hyperref}
\usepackage[all]{xy}
\usepackage{enumerate}
\usepackage[mathscr]{eucal}
\usepackage{bbm}
\usepackage{tikz}
\usetikzlibrary{decorations.pathreplacing,angles,quotes,knots}

\providecommand{\U}[1]{\protect\rule{.1in}{.1in}}
\setlength{\topmargin}{-.25in}
\setlength{\textheight}{9.25in}
\setlength{\oddsidemargin}{0.0in}
\setlength{\evensidemargin}{0.0in}
\setlength{\textwidth}{6.5in}

\def\theenumi{\arabic{enumi}}

\def\theenumii{\alph{enumii}}
\def\p@enumii{\theenumi.}

\def\theenumiii{\arabic{enumiii}}
\def\p@enumiii{(\theenumi)(\theenumii)}

\def\p@enumiv{\p@enumiii.\theenumiii}
\parindent=0pt

\fancyhf{}
\cfoot{\thepage}
\pagestyle{fancy}   

\theoremstyle{plain}
\newtheorem{theorem}{Theorem}[section]

\newtheorem{proposition}[theorem]{Proposition}

\newtheorem{corollary}[theorem]{Corollary}
\newtheorem{conjecture}[theorem]{Conjecture}
\numberwithin{equation}{section}

\theoremstyle{definition}

\newtheorem{definition}[theorem]{Definition}
\newtheorem{example}[theorem]{Example}
\newtheorem{remark}[theorem]{Remark}

\newtheorem{thmab}{Theorem}

\renewenvironment{proof}[1][\proofname]{{\bfseries #1\\}}{\qed}

\setcounter{tocdepth}{1}

\DeclareMathOperator{\FI}{FI}
\DeclareMathOperator{\VIC}{VIC}

\DeclareMathOperator{\Mod}{-Mod}

\DeclareMathOperator{\Hom}{Hom}
\DeclareMathOperator{\Inn}{Inn}

\DeclareMathOperator{\Aut}{Aut}

\DeclareMathOperator{\Conf}{Conf}

\DeclareMathOperator{\im}{im}
\newcommand{\Sn}{\mathfrak{S}}
\newcommand{\as}{\text{*}}

\newcommand{\C}{{\mathcal{C}}}
\newcommand{\Ob}{\mathcal{O}}

\newcommand{\Z}{{\mathbb{Z}}}

\newcommand{\R}{\mathbb{R}}
\newcommand{\Q}{\mathbb{Q}}
\newcommand{\F}{\mathbb{F}}
\newcommand{\M}{\mathcal{M}}
\newcommand{\K}{\mathcal{K}}
\newcommand{\Par}{\mathcal{P}}
\newcommand{\mi}{\mathfrak{m}}
\newcommand{\ai}{\mathfrak{a}}

\newcommand{\dt}{\bullet}
\newcommand{\arXiv}[1]{\href{http://arxiv.org/abs/#1}{\nolinkurl{arXiv:#1}}}
\newcommand{\arXivV}[2]{\href{http://arxiv.org/abs/#1}{\nolinkurl{arXiv:#1v#2}}}

\title{Asymptotic behaviors in the homology of symmetric group and finite general linear group quandles}

\author{Eric Ramos}
\address{Department of Mathematics, University of Wisconsin - Madison.}
\email{eramos@math.wisc.edu}

\thanks{The author was supported by NSF grants DMS-1502553 and DMS-1704811.}

\begin{document}

\begin{abstract}
A quandle is an algebraic structure which attempts to generalize group conjugation. These structures have been studied extensively due to their connections with knot theory, algebraic combinatorics, and other fields. In this work, we approach the study of quandles from the perspective of the representation theory of categories. Namely, we look at collections of conjugacy classes of the symmetric groups and the finite general linear groups, and prove that they carry the structure of $\FI$-quandles (resp. $\VIC(q)$-quandles). As applications, we prove statements about the homology of these quandles, and construct $\FI$-module and $\VIC(q)$-module invariants of links.
\end{abstract}

\maketitle

\section{Introduction}
A \textbf{quandle} is a set $X$ paired with a binary operation $\rhd$ satisfying the following:
\begin{enumerate}
\item $x \rhd x = x$ for all $x \in X$;
\item $y \mapsto y \rhd x$ is a bijection for all $x \in X$;
\item $(x \rhd y) \rhd z = (x \rhd z) \rhd (y \rhd z)$ for all $x,y,z \in X$.\\
\end{enumerate}
For instance, if $G$ is any group, then $G$ becomes a quandle with operation $x \rhd y = yxy^{-1}$. While group conjugation may be the most obvious, and perhaps the most motivating, example of a quandle, these objects have been shown to appear all throughout algebra and topology. For instance, one can find applications of quandles to knot theory \cite{J,EN,CESY,CJKLS,CKS}, algebraic geometry \cite{T}, and algebraic combintorics \cite{EG}. In \cite{CJKLS}, a theory of quandle homology was introduced, building off previous work of Fenn, Rourke, and Sanderson \cite{FRS}. Since then, there has been a large amount of interest directed towards proving facts about these homology groups. The purpose of this paper is to study quandles and their homology from a new perspective: that of asymptotic algebra.\\

Let $\FI$ denote the category whose objects are the sets $[n] = \{1,\ldots, n\}$ and whose morphisms are injections. In their seminal work \cite{CEF}, Church, Ellenberg, and Farb introduced the notion of an $\FI$-module. It was shown that these modules have a plethora of applications to topology, arithmetic, and algebraic combinatorics. An $\FI$-module over a commutative ring $k$ is a functor from the $\FI$ to the category of $k$-modules. Put another way, an $\FI$-module is an object (in an abelian category) which encodes an infinite family of $\Sn_n$-representations, where $n$ is allowed to vary. Finite generation of an $\FI$-module is then shown to imply remarkably strong facts about the symmetric group representations which constitute it (see Defintion \ref{fg} and Theorem \ref{maincor} or \cite{CEF,CEFN}, for example). These results stress the following philosophy, which we will use in this work: Given a family of algebraic objects which display some kind of asymptotically regular behavior, there is a single object, finitely generated in some abelian category, which encodes the entire family.\\

To begin to state the results of this work, we start with the symmetric group. Recall that conjugacy classes of the symmetric group $\Sn_n$ are in natural bijection with partitions of $n$ (see Definition \ref{primitive}). Let $\lambda$ be a partition of $m$ which does not have any 1's (see Definition \ref{primitive}), and let $c_\lambda$ be the corresponding conjugacy class. Then for each $n \geq m$, we can define $c^n_\lambda$ as the conjugacy class of $\Sn_n$ obtained from $c_\lambda$ by adding $n-m$ 1-cycles. We begin with the following.\\

\begin{thmab}
Let $\lambda$ be a partition of a fixed integer $m$. Then the assignment
\[
n \mapsto c^n_{\lambda}
\]
can be extended to a functor from the $\FI$ to the category of quandles.
\end{thmab}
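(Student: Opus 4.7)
The plan is to promote the assignment $n \mapsto c_\lambda^n$ to a functor by exhibiting, for every injection $f\colon [n] \hookrightarrow [n']$, a natural map of quandles $f_*\colon c_\lambda^n \to c_\lambda^{n'}$ that strictly composes. The obvious candidate is the \emph{transport} map: given $\sigma \in \Sn_n$, define $f_*\sigma \in \Sn_{n'}$ by
\[
(f_*\sigma)(j) = \begin{cases} f(\sigma(i)) & \text{if } j = f(i) \in \im(f), \\ j & \text{if } j \notin \im(f). \end{cases}
\]
In effect, $f_*\sigma$ is the permutation whose only nontrivial action is the conjugate of $\sigma$ along the bijection $[n] \cong \im(f)$, extended by the identity on the complement.

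First I would verify that $f_*\colon \Sn_n \to \Sn_{n'}$ is a group homomorphism. This is a direct check on the definition, splitting into the cases $j \in \im(f)$ and $j \notin \im(f)$. Because every group homomorphism preserves conjugation, $f_*$ is automatically a map of the underlying conjugation quandles, i.e.\ $f_*(\sigma \rhd \tau) = f_*(\tau \sigma \tau^{-1}) = f_*(\tau)f_*(\sigma)f_*(\tau)^{-1} = f_*(\sigma) \rhd f_*(\tau)$, which hands us the quandle-homomorphism property for free.

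Next I would check that $f_*$ restricts to a map $c_\lambda^n \to c_\lambda^{n'}$. By construction, the nontrivial cycles of $f_*\sigma$ are precisely the images under $f$ of the nontrivial cycles of $\sigma$, so they have exactly the same multiset of lengths. Since $\lambda$ has no parts equal to $1$, the nontrivial cycles of $\sigma \in c_\lambda^n$ account for all of $\lambda$; after transport, the nontrivial cycles of $f_*\sigma$ still account for all of $\lambda$, and the remaining $n' - m$ points of $[n']$ are fixed. Thus $f_*\sigma$ lies in $c_\lambda^{n'}$.

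Finally I would verify functoriality: $(\text{id}_{[n]})_* = \text{id}$ is immediate, and $(g \circ f)_* = g_* \circ f_*$ follows by unwinding the casework on whether a given $j \in [n'']$ lies in $\im(g \circ f)$, $\im(g) \setminus \im(g\circ f)$, or outside $\im(g)$. None of these steps is difficult in isolation; the only mild subtlety, and the place where the hypothesis that $\lambda$ has no $1$'s is used, is in confirming that the cycle type of $f_*\sigma$ is unambiguously $\lambda$ padded with $1$-cycles, so that the target conjugacy class is correctly identified. Modulo that cycle-type bookkeeping, the argument is essentially the observation that the standard $\FI$-structure on the symmetric groups descends to any family of conjugacy classes that is stable under the padding operation.
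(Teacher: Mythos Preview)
Your construction is exactly the one the paper gives: it defines $f_\as(\sigma)(i) = f\circ\sigma\circ f^{-1}(i)$ for $i\in\im(f)$ and $f_\as(\sigma)(i)=i$ otherwise, and then simply asserts that this makes $n\mapsto c^n_{\bm\lambda}$ a functor to quandles. You have supplied the routine verifications (group homomorphism, preservation of cycle type under the primitivity hypothesis, functoriality) that the paper leaves implicit, so your proposal is correct and follows the same approach.
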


As an application of of this theorem, we will be able to prove asymptotic facts about the homology of the quandles $c_\lambda^n$ (see Definition \ref{homology}).\\

\begin{thmab}
Let $\lambda$ be a partition of a fixed integer $m$, let $k$ be a commutative Noetherian ring, and let $i \geq 0$ be an integer. Then the assignment
\[
n \mapsto H^Q_i(c_\lambda^{n};k)
\]
can be extended to a finitely generated $\FI$-module (see Definition \ref{fg}). In particular,
\begin{enumerate}
\item If $k$ is a field, then there exists a polynomial $p_{Q,i}(T) \in \Q[T]$ of degree $\leq (i\cdot m)$ such that for all $n \gg 0$,
\[
p_{Q,i}(n) = \dim_k H^Q_i(c^n_{\lambda};k)
\]
\item For each $n$, let $\ai_{Q,i,n} \subseteq k$ be the ideal generated by non-zero-divisors which annihilate $H^Q_i(c^n_{\lambda};k)$. Then for $n \gg 0$, $\ai_{Q,i,n}$ is independent of $n$. In particular, if $k = \Z$, then there exists an integer $e_{Q,i}$, independent of $n$, such that $e_{Q,i}$ is the exponent of the abelian group $H^Q_i(c^n_{\lambda})$, for $n \gg 0$.\\
\end{enumerate}
\end{thmab}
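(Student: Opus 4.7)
The plan is to combine Theorem A with the Noetherianity of the FI-module category over a commutative Noetherian ring. First, observe that for any quandle $X$, the $i$-th quandle chain module $C^Q_i(X;k)$ is a quotient of $k[X^i]$ by the submodule spanned by degenerate tuples, and both the chain modules and differentials depend only on the quandle operation $\rhd$. Consequently, $H^Q_i(-;k)$ is a covariant functor from the category of quandles to $k$-modules, and composing with the FI-quandle of Theorem A equips $n\mapsto H^Q_i(c^n_\lambda;k)$ with the structure of an FI-module. The remaining task is to prove finite generation.

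To prove finite generation, my starting point is the claim that the FI-set $n\mapsto c^n_\lambda$ is finitely generated in degree $m$: every $\sigma\in c^n_\lambda$ has non-fixed support of cardinality exactly $m$ (since $\lambda$ has no $1$'s), so the FI-morphism induced by the inclusion of this support sends a suitable element of $c^m_\lambda$ to $\sigma$. Linearizing, $k[c^\bullet_\lambda]$ is a finitely generated FI-module of degree $\leq m$. For the $i$-fold chain module, $k[(c^\bullet_\lambda)^i]$ carries the diagonal FI-action and is finitely generated in degree $\leq i\cdot m$ by a union-of-supports argument: any $i$-tuple of elements of $c^n_\lambda$ is in the image of the FI-morphism given by the inclusion of the union of their individual supports, which has cardinality at most $i\cdot m$. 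Hence each $C^Q_i(c^\bullet_\lambda;k)$ is a quotient of a finitely generated FI-module, and by the Noetherianity of FI-modules over a commutative Noetherian ring \cite{CEFN}, the subquotient $H^Q_i(c^\bullet_\lambda;k)$ is also finitely generated, with generation degree bounded by $i\cdot m$.

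The consequences (1) and (2) then follow from standard facts about finitely generated FI-modules (see Theorem \ref{maincor}): a finitely generated FI-module over a field with generation degree $\leq d$ has $\dim_k M_n$ eventually equal to a polynomial in $n$ of degree $\leq d$, giving (1) with $d\leq i\cdot m$; and the annihilator ideals of a finitely generated FI-module over a commutative Noetherian ring stabilize for $n\gg 0$, giving (2). The main technical obstacle I anticipate is verifying that the quandle differential is compatible with the FI-action, i.e., that the FI-morphisms of Theorem A lift to chain maps on the quandle chain complexes. This requires checking that extending a permutation by the identity on the complement of an embedded $[n]$ commutes with conjugation, a calculation that is straightforward but must be done carefully for the FI-module structure inherited from Theorem A to descend properly to the homology.
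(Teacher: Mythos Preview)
Your proposal is correct and follows essentially the same route as the paper: prove that the chain $\FI$-modules $C_i(c^\bullet_\lambda;k)\cong k[(c^\bullet_\lambda)^i]$ are finitely generated in degree $\leq i\cdot m$ via the union-of-supports argument, verify that the quandle differential is $\FI$-equivariant, and then invoke Noetherianity together with Theorem~\ref{maincor}. One small correction: Noetherianity only tells you that the subquotient $H^Q_i$ is finitely generated, not that its \emph{generation degree} is bounded by $i\cdot m$ (submodules can require generators in higher degree than the ambient module); the paper is careful to say instead that $H^Q_i$ is $(i\cdot m)$-\emph{small}, which is precisely the hypothesis Theorem~\ref{maincor} needs to produce the degree bound in part~(1).
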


As a second application, we construct $\FI$-module invariants of links. In \cite{J}, Joyce associates to each oriented link $L$ a quandle $\K(L)$ known as the \textbf{fundamental quandle} of $L$ (see Example \ref{funquandle}). One then is then motivated to construct invariants of the link $L$ by looking at the Hom-sets, $\Hom(\K(L),X)$, where $X$ is quandle. These so-called \textbf{quandle colorings} of $L$ have been studied extensively \cite{CESY,EK,EN}. For instance, it can be shown that the Alexander polynomial of links can be recovered from examining certain quandles \cite{EN}. We will prove the following.\\

\begin{thmab}
Let $L$ be an oriented link, and let $\lambda$ be a partition of some fixed integer $m$. Then there exists a finitely generated $\FI$-module $V^{L,\lambda}$ satisfying,
\[
\dim_\Q V^{L,\lambda}([n]) = |\Hom(\K(L),c_{\lambda}^n)|.
\]
In particular, there exists a polynomial $p_{L,\lambda} \in \Q[T]$ such that for all $n \geq 0$
\[
p_{L,\lambda}(n) = |\Hom(\K(L),c_{\lambda}^n)|
\]
\end{thmab}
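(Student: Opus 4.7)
The strategy is to realize $V^{L,\lambda}$ as an $\FI$-submodule of a manifestly finitely generated $\FI$-module, and then appeal to the Noetherianity of $\FI$-modules over a Noetherian ring.

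By Theorem A, the assignment $n \mapsto c_\lambda^n$ extends to a functor from $\FI$ to the category of quandles. Post-composing with the set-valued representable functor $\Hom(\K(L),-)$ on quandles produces an $\FI$-set $n \mapsto \Hom(\K(L),c_\lambda^n)$; let $V^{L,\lambda}$ denote its free $\Q$-linearization. The cardinality identity in the statement is then tautological, so the theorem reduces to showing that $V^{L,\lambda}$ is a finitely generated $\FI$-module.

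To accomplish this, I would fix a finite generating set $g_1,\ldots,g_r$ of $\K(L)$ (for example, the arcs of a link diagram, following Joyce). Evaluation on $(g_1,\ldots,g_r)$ then defines a natural injection of $\FI$-sets
\[
\Hom(\K(L),c_\lambda^\bullet) \hookrightarrow (c_\lambda^\bullet)^r,
\]
where naturality is a consequence of the structure maps of $c_\lambda^\bullet$ being quandle homomorphisms. Because $\lambda$ has no parts equal to $1$, every element of $c_\lambda^n$ has support of size exactly $m$, and the construction used to prove Theorem A realizes $c_\lambda^\bullet$ as generated in degree $m$: every element of $c_\lambda^n$ is the pushforward, under some injection $[m] \hookrightarrow [n]$, of a permutation in $c_\lambda$. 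Consequently, an $r$-tuple in $(c_\lambda^n)^r$ has combined support of size at most $rm$, so $(c_\lambda^\bullet)^r$ is generated as an $\FI$-set in degree $\leq rm$ and its linearization $\Q[(c_\lambda^\bullet)^r]$ is a finitely generated $\FI$-module.

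Since $V^{L,\lambda}$ embeds as an $\FI$-submodule of $\Q[(c_\lambda^\bullet)^r]$, Noetherianity of $\FI$-modules over a Noetherian ring, due to Church--Ellenberg--Farb--Nagpal, forces $V^{L,\lambda}$ itself to be finitely generated. The polynomial growth of $\dim_\Q V^{L,\lambda}([n])$ for $n \gg 0$ then follows from the main structural theorem for finitely generated $\FI$-modules invoked in Theorem B(1). The point requiring the most care is the verification that the evaluation map is $\FI$-equivariant with respect to the $\FI$-quandle structure built in Theorem A; once that explicit compatibility is unpacked, the remaining steps are formal.
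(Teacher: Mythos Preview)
Your argument for finite generation is essentially the paper's own: embed $V^{L,\lambda}$ into the linearization of $(c_\lambda^\bullet)^r$ using arcs of a diagram as generators, observe that the latter is generated in degree $\leq rm$, and invoke Noetherianity. This is exactly the content of the paper's Proposition establishing finite generation of $V^{L,\bm{\lambda}}$.

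There is, however, a genuine gap relative to the statement as written. You conclude polynomiality of $\dim_\Q V^{L,\lambda}_n$ only for $n \gg 0$, via the general structure theory for finitely generated $\FI$-modules. But the theorem asserts that $p_{L,\lambda}(n) = |\Hom(\K(L),c_\lambda^n)|$ for \emph{all} $n \geq 0$, and mere finite generation does not deliver this: a finitely generated $\FI$-module can certainly fail to have polynomial dimension at small $n$. The paper obtains the sharper conclusion by an additional step you have not supplied: it shows that $C_l(c_\lambda^\bullet;\Q)$, and hence its submodule $V^{L,\lambda}$, carries the structure of an $\FI_\sharp$-module (partially defined injections act by sending a tuple of permutations to zero whenever the cycle type is disrupted). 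The Church--Ellenberg--Farb structure theorem for $\FI_\sharp$-modules then forces $V^{L,\lambda}$ to be \emph{free} as an $\FI$-module, and for free modules the dimension function is an honest polynomial for every $n \geq 0$, not merely eventually.

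So your plan proves the first sentence of the theorem and the ``eventually polynomial'' weakening of the second; to match the stated conclusion you must either supply the $\FI_\sharp$-structure or give some other reason why the polynomial agrees with the dimension at small $n$.
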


Note that the results presented in the body of this work are somewhat stronger than the above. Firstly, our methods will allow us to prove Theorems B and C for any finite union of primitive conjugacy classes, not just for single primitive conjugacy classes. Secondly, we also provide bounds on the so-called \textbf{generating degree} of the functors $V^{L,\lambda}$ (see Definition \ref{fg}), and exhibit that they are \textbf{free} (see Example \ref{free}). To accomplish this, we must use a deep structure theorem of Church, Ellenberg, and Farb \cite{CEF}. See Theorems \ref{symhomfg} and \ref{fiinv} for the exact statements.\\

To conclude the paper, we prove analogs of the above theorems for conjugacy classes of the finite general linear groups $GL_n(q)$. We accomplish this by studying representations of the category $\VIC(q)$ (see Definition \ref{catdef}), which was first introduced by Djament \cite{D} and further explored by Putman and Sam \cite{PS}. These results can be found throughout Section \ref{vic}. We note that the similarities in the statements between these two cases is not a coincidence. It is the belief of the author that there should be a framework in the representation theory of more general ``combinatorial'' categories which unifies all of these results (see Remark \ref{more}).\\

\section*{Acknowledgments}
The author is indebted to Jordan Ellenberg and Jennifer Wilson for many fruitful conversations during the conception of this work. Thanks must also be sent to Steven Sam, who caught a mistake in a previous version of this work. The author would also like to acknowledge the generous support of the National Science Foundation through the grants DMS-1502553 and DMS-1704811.\\

\section{Preliminary notions}

\subsection{Quandle and rack homology}

In this section we spend some time outlining the theory of quandles, racks, and their homology. We will find that these objects not only have interesting internal algebraic properties, but also admit many useful applications to the theory of knots and their generalizations. For a reference on the subject, see \cite{FRS,EN}

\begin{definition}
A \textbf{rack} is a set equipped with a binary operation $(X,\rhd)$ satisfying the following conditions:
\begin{enumerate}
\item for all $x,y,z \in X$, 
\[
(x \rhd y) \rhd z = (x \rhd z) \rhd (y \rhd z);
\]
\item for all $x \in X$, the function $\dt \rhd x$ is a bijection. That is, for all $x,z \in X$ there is a unique $y \in X$ such that $y \rhd x = z$.
\end{enumerate}
A \textbf{quandle} is a rack $(X,\rhd)$ with the added reflexivity condition:
\begin{enumerate}
\setcounter{enumi}{2}
\item for all $x \in X$,
\[
x \rhd x = x.
\]
\end{enumerate}
\end{definition}

In this work, we will be primarily concerned with quandles, although certain results will hold for more general racks. We take some time to exhibit some important examples, which will appear throughout the work.

\begin{example}
Let $G$ be any group. Then $G$ forms a quandle under conjugation. Namely, for any $x,y \in G$,
\[
x \rhd y := yxy^{-1}
\]
More generally, if $X$ is any union of conjugacy classes of a group $G$, then $X$ forms a quandle under conjugation. Indeed, one may think of quandles as an attempt to axiomatize conjugation.\\
\end{example}

\begin{example}\label{dihedral}
Let $r$ be a fixed positive integer. Then $\Z/r\Z$ is a quandle with operation given by
\[
x \rhd y = 2y - x
\]
This quandle is often called the dihedral quandle, and has been studied extensively \cite{NP,Cl,P}. Note that this quandle is equivalent to the conjugacy class of reflections in the dihedral group $D_{2r}$.\\
\end{example}

\begin{example}\label{funquandle}
Let $L$ be an oriented link, and choose a projection of $L$ onto the plane, $D(L)$, keeping track of under and over crossings. Such an object is also known as a \textbf{diagram} for the link $L$. An \textbf{arc} of $D(L)$ is an embedded copy of the interval found between two undercrossings (see Figure \ref{arcs}). Then we may associate a quandle to $D(L)$, usually called the \textbf{fundamental quandle of $L$}, $\K(L)$, by taking the free quandle formally generated by the arcs of $D(L)$ and imposing the relations prescribed by Figure \ref{knotrels}. See Figure \ref{arcs} for an example of the fundamental quandle of the trefoil knot.\\

The fundamental quandle was introduced by Joyce in his work \cite{J}. It is a fact that the fundamental quandle is independent of the choice of diagram. That is, it is an invariant of the link $L$. This can be seen by noting that the axioms of quandles can be translated, using the relations of Figure \ref{knotrels}, into the Reidemeister moves. It was proven by Joyce \cite{J}, that if $L$ is a knot, then its fundamental quandle determines it up to a homeomorphism of $S^3$. We will later use the fundamental quandle to define link invariants.\\
\end{example}

\begin{figure}
\begin{tikzpicture}

	\draw[ultra thick, ->] (0,0) node (1)[above]  {$x$}
			--(2,2) node (2)[above] {$x$};
	\draw[ultra thick]	(2,0) node (3)[above] {$y$}
    	-- (1.2,.8) node (4) {};
	\draw[ultra thick] (.8,1.2) node (5) {}
			--(0,2) node (6)[above] {$y \rhd x$};
		 
\end{tikzpicture}
\caption{The defining relation of the fundamental quandle of a link. One may read a relation of the form $y \rhd x = z$ as ``the arc $y$ goes under the arc $x$ and becomes the arc $z$.''}\label{knotrels}
\end{figure}
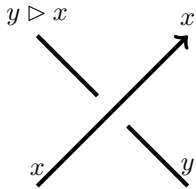

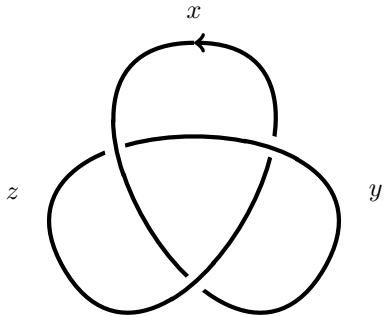
\begin{figure}

\begin{tikzpicture}
\begin{knot}[
consider self intersections,
clip width=5,
flip crossing = 2
]
\strand[ultra thick,->]
(90:2) to[out=180,in=-120,looseness=2]
(-30:2) to[out=60,in=120,looseness=2]
(210:2) to[out=-60,in=0,looseness=2] (90:2);
\end{knot}
\draw (0,2.2) node[above] {$x$};
\draw (-2.2,0) node[left] {$z$};
\draw (2.2,0) node[right] {$y$};
\end{tikzpicture}

\caption{A diagram of the trefoil knot $3_1$. This diagram has three arcs, labeled $x,y,$ and $z$. Using the relation of Figure \ref{knotrels}, we discover that $\K(3_1) = \langle x,y,z \mid y \rhd z = x, z \rhd x = y, x \rhd y = z \rangle$. The LaTex code for the above diagram can be found in \cite{St}}\label{arcs}
\end{figure}

\begin{definition}\label{homology}
Let $(X,\rhd)$ be a rack, and for each $i\geq 1$, let $C_i^R(X)$ be the free $\Z$-module on the elements of $X^i$. for $i > 1$, we define a differential $\partial_i:C_i \rightarrow C_{i-1}$,
\[
\partial_i(x_1,\ldots,x_i) = \sum_{j\geq 2} (-1)^j((x_1,\ldots,x_{j-1},x_{j+1},\ldots,x_i) - (x_1 \rhd x_j,x_2 \rhd x_j, \ldots,x_{j-1}\rhd x_j, x_{j+1},\ldots,x_i)).
\]
By convention we set $\C_0^R(X) = 0$ to be the trivial. The \textbf{$i$-th rack homology of $X$}, $H^R_i(X)$, is defined to be the $i$-th homology of the complex
\[
C_\dt^R:\ldots \rightarrow C_{i+1}^R(X) \stackrel{\partial_{i+1}}\rightarrow C_i^R(X) \rightarrow C_{i-1}^R(X) \rightarrow \ldots
\]
If $X$ is a quandle, then there is a quotient complex of $C_\dt^R$ whose terms are given by
\[
C_i^Q(X) := C_i^R(X)/((x_1,\ldots,x_i) \mid x_j = x_{j+1} \text{ for some $j$})
\]
The \textbf{$i$-th quandle homology}, $H^Q_i(X)$ is the $i$-th homology of this complex. If $A$ is any abelian group we may also define, via the usual universal coefficient theorem, $H^Q_i(X;A)$ and $H^R_i(X;A)$.\\
\end{definition}

Homology groups of quandles and racks have been an active field of study since their discovery in \cite{CJKLS,FRS}. One reason for this is their deep connections with knot theory \cite{CEGS, CJKLS,EN}. Another reason is due to how surprisingly hard these groups are to compute. While the Betti numbers have been classified in many cases (see Theorem \ref{betticomp}), torsion is still not totally understood. One of the major computational difficulties derives from the fact that there is no clear way to interpret quandle homology as the homology of some topological space. There have, however, been many partial results in this direction \cite{Cl,CKS,IK}.\\

\begin{theorem}[Litherland \& Nelson, \cite{LN}; Etingof \& Gra\~na, \cite{EG}]
Let $X$ be a quandle. Then the quotient map
\[
C^R_\dt(X) \rightarrow C^Q_\dt(X)
\]
splits. Moreover, if $X$ is a finite rack, then any primes which annihilate $H_i^R(X)$ are divisors of $|\Inn(X)|$, where
\[
\Inn(X) := \langle\cdot \rhd x \mid x \in X \rangle \leq \Aut_{\text{rack}}(X).
\]
\end{theorem}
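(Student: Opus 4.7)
The theorem splits into two assertions with quite different flavors, which I would address separately.

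\textbf{The splitting.} Let $D_\dt^R(X) \subseteq C_\dt^R(X)$ be the submodule generated by degenerate tuples, i.e.\ those $(x_1, \ldots, x_i)$ with $x_j = x_{j+1}$ for some $j$, so that $C_\dt^Q(X) = C_\dt^R(X)/D_\dt^R(X)$. The first task is to check that $D_\dt^R$ is actually a subcomplex of $C_\dt^R$. If $x_k = x_{k+1}$, then every summand of $\partial$ indexed by $j \neq k, k+1$ preserves the equality (the $\rhd$-action applies identically to the two equal positions, and deletion at an unrelated position leaves the pair intact), while the two summands with $j = k$ and $j = k+1$ cancel against each other — or vanish individually when $k = 1$ — by way of the quandle axiom $x_k \rhd x_k = x_k$. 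With this established, I would construct an explicit chain-level section $s\colon C_\dt^Q \to C_\dt^R$ of the quotient by sending a non-degenerate tuple to itself plus a correction in $D_\dt^R$, chosen inductively on dimension so as to cancel the degenerate part of $\partial$ on its image. The combinatorics of this inductive construction is the main obstacle, but requires no input beyond the quandle axioms and the explicit form of $\partial$.

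\textbf{The annihilation.} For each $y \in X$, the inner automorphism $\phi_y(x) = x \rhd y$ acts on $C_\dt^R$ diagonally, and the rack axioms make this a chain map. A direct calculation produces an explicit chain homotopy $h_y \colon C_i^R \to C_{i+1}^R$ between $\phi_y$ and the identity, built by prepending $y$ to each tuple with suitable signs and verifying $\partial h_y + h_y \partial = \phi_y - \mathrm{id}$ termwise against the formula for $\partial$. Consequently, every element of $\Inn(X)$ acts as the identity on $H_i^R(X)$.

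The harder claim is to promote triviality of the $\Inn(X)$-action on homology to the statement that $|\Inn(X)|$ itself annihilates $H_i^R(X)$; the former alone does not formally imply the latter. My plan is a transfer-style argument. Regarding $C_\dt^R$ as a complex of $\Z[\Inn(X)]$-modules, I would form the double complex $C_\dt^R \otimes_{\Z[\Inn(X)]} P_\dt$, where $P_\dt$ is a free resolution of $\Z$ over $\Z[\Inn(X)]$, and use the degeneration of one of its two spectral sequences to extract an endomorphism of $H_i^R(X)$ that is simultaneously $|\Inn(X)| \cdot \mathrm{id}$ and zero. The key input for this endomorphism is the collection $\{h_y\}_{y \in X}$, which must be assembled into a coherent system compatible with all relations among the generators of $\Inn(X)$. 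This coherence problem is the main obstacle and the technical heart of the Etingof--Gra\~na result; once it is resolved, $|\Inn(X)| \cdot H_i^R(X) = 0$, whence any prime annihilator of $H_i^R(X)$ must divide $|\Inn(X)|$.
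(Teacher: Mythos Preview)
The paper does not actually contain a proof of this theorem: it is quoted from the literature, with the splitting attributed to Litherland--Nelson and the torsion statement to Etingof--Gra\~na. So there is no ``paper's own proof'' to compare against, and I can only assess your proposal on its own merits.

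Your treatment of the splitting is a reasonable sketch and is in the spirit of the Litherland--Nelson argument, though the inductive construction of the section is left entirely implicit.

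Your treatment of the torsion statement, however, contains a genuine error. You aim to exhibit an endomorphism of $H_i^R(X)$ which is ``simultaneously $|\Inn(X)|\cdot\mathrm{id}$ and zero,'' and conclude that $|\Inn(X)|\cdot H_i^R(X)=0$. But this conclusion is false: by the very next theorem in the paper (Theorem~\ref{betticomp}), $\dim_\Q H_i^R(X;\Q)=m^i>0$ whenever $X$ has at least one orbit, so $H_i^R(X)$ has nontrivial free part and cannot be annihilated by any nonzero integer. The statement you are asked to prove says only that the \emph{torsion primes} of $H_i^R(X)$ divide $|\Inn(X)|$, which is a much weaker assertion. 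No transfer or spectral-sequence argument of the shape you describe can produce the stronger (false) conclusion, so the plan as written cannot succeed.

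The actual Etingof--Gra\~na strategy is different in character. Rather than trying to kill the homology, one works over the ring $\Z[1/N]$ with $N=|\Inn(X)|$ and shows that the homology there is \emph{free}. The mechanism is that, once $N$ is invertible, the averaging projector $\frac{1}{N}\sum_{g\in\Inn(X)}g$ is available at the chain level; combined with the fact (which you correctly establish) that $\Inn(X)$ acts trivially on homology, this lets one replace $C_\dt^R(X)$ by its subcomplex of $\Inn(X)$-invariants without changing homology. On that subcomplex the differential degenerates and the homology is computed explicitly as a free $\Z[1/N]$-module of rank $m^i$. Freeness over $\Z[1/N]$ is exactly the statement that no prime coprime to $N$ appears in the torsion of $H_i^R(X)$. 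Your chain homotopy showing $\phi_y\simeq\mathrm{id}$ is the correct first step; the misstep is in what you try to extract from it.
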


Note that the statement about the splitting in the above theorem was proven by Litherland and Nelson \cite{LN}, while the statement on torsion was exhibited by Etingof and Gra\~na \cite{EG}. Litherland and Nelson do also prove a statement about the torsion part of the rack homology groups in \cite{LN}, although they require certain technical conditions be placed on the rack. In particular, they prove that if $X$ is a finite rack which is sufficiently nice, then the exponent of $H_i^R(X)$ is a divisor of $|X|^i$.  Computations suggest that the exponent of $H_i^R(X)$ is often smaller than $|X|^i$. In fact, we will later construct infinitely many families of quandles for which $|X|^i$ is strictly bigger than the actual exponent.\\

One striking fact about quandle and rack homology is that the Betti numbers are very easily computable.\\

\begin{theorem}[Etingof \& Gra\~na, \cite{EG}; Litherland \& Nelson, \cite{LN}]\label{betticomp}
Let $X$ be a finite rack, and write $m$ for the number of orbits of the action of $X$ on itself via right multiplication. Then,
\begin{enumerate}
\item $\dim_\Q(H^R_i(X;\Q)) = m^i$;
\item $\dim_\Q(H^Q_i(X;\Q)) = m(m-1)^{i-1}$ if $X$ is a quandle.\\
\end{enumerate}
\end{theorem}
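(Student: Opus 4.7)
My plan is to reduce the computation to the case of a trivial rack, via the orbit projection. Let $\pi : X \to \pi_0(X)$ denote the projection onto the set of orbits, equipped with the \emph{trivial} rack structure $c \rhd c' = c$. Since each right multiplication $\cdot \rhd y$ permutes each orbit (the orbits are by construction closed under such operations), the map $\pi$ is a morphism of racks. On the trivial rack $\pi_0(X)$ the two face maps appearing in the rack differential coincide, so $\partial$ vanishes identically on $C^R_\bullet(\pi_0(X);\Q)$, giving
\[
H^R_i(\pi_0(X);\Q) \;=\; \Q[\pi_0(X)]^{\otimes i} \;\cong\; \Q^{m^i}.
\]
For the quandle version I would invoke the Litherland--Nelson splitting $C^R_\bullet \cong C^Q_\bullet \oplus C^D_\bullet$ quoted earlier; it is natural under rack morphisms and so passes through $\pi_\ast$. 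Counting non-degenerate tuples $(c_1,\ldots,c_i) \in \pi_0(X)^i$ with $c_j \neq c_{j+1}$ for all $j$ gives exactly $m(m-1)^{i-1}$.

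The bulk of the work is then to show that the induced chain map $\pi_\ast : C^R_\bullet(X;\Q) \to C^R_\bullet(\pi_0(X);\Q)$ is a quasi-isomorphism over $\Q$. I would first construct a candidate section by orbit-averaging,
\[
\psi(c_1,\ldots,c_i) \;=\; \frac{1}{|X_{c_1}|\cdots |X_{c_i}|}\sum_{(x_1,\ldots,x_i) \in X_{c_1}\times\cdots\times X_{c_i}} (x_1,\ldots,x_i).
\]
Since the target has zero differential, checking that $\psi$ is a chain map reduces to verifying $\partial\psi = 0$. This follows by a direct calculation: for each $k \geq 2$, the substitution $x_l \mapsto x_l \rhd x_k$ for $l < k$ is a bijection on each orbit $X_{c_l}$ by rack axiom (2), so after reindexing the ``action'' summand in $\partial$ becomes the ``deletion'' summand, producing cancellation. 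Combined with the obvious identity $\pi_\ast \psi = \text{id}$, this already shows $\pi_\ast$ is surjective on homology and $\psi_\ast$ is injective.

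The main obstacle is to establish that $\psi \circ \pi_\ast$ is chain-homotopic to the identity on $C^R_\bullet(X;\Q)$, equivalently that $\ker(\pi_\ast)$ is an acyclic subcomplex. I expect this to be the technical heart of the argument, as it is in the original Etingof--Gra\~na / Litherland--Nelson treatments. The approach is to exploit the first rack axiom $(x \rhd z)\rhd(y \rhd z) = (x \rhd y)\rhd z$, which ensures that the finite group $G := \Inn(X)$ acts on $C^R_\bullet(X;\Q)$ by chain automorphisms via the diagonal action. Over $\Q$, averaging over $G$ produces a chain-level projector onto the $G$-invariants; one then must refine this invariant subspace to cut out the image of $\psi$ and argue that the remaining isotypic components are contractible. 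Concretely, I would filter $C^R_\bullet(X;\Q)$ by the orbit-pattern $(\pi(x_1),\ldots,\pi(x_i))$ of the components. On each fixed pattern, the transitive $G$-action on each orbit factor permits an explicit contracting homotopy on the kernel of orbit-summation (built by averaging over coset representatives that translate one orbit element to another), and a standard spectral sequence assembles these local contractions into a contracting homotopy of $\ker(\pi_\ast)$ as a whole. With the quasi-isomorphism in hand, the two Betti number formulas follow directly from the computations on $\pi_0(X)$ made in the first paragraph.
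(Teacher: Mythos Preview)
The paper does not prove this theorem; it is quoted as a result of Etingof--Gra\~na and Litherland--Nelson, so there is no in-paper argument to compare against. That said, your outline is in the spirit of the Etingof--Gra\~na approach, and the preliminary steps are correct: $\pi$ is a rack morphism, the rack differential vanishes on the trivial rack $\pi_0(X)$, your averaging section $\psi$ really is a chain map for exactly the bijection reason you give, and the non-degenerate count $m(m-1)^{i-1}$ is right.

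The gap is precisely the one you flag: the acyclicity of $\ker(\pi_\ast)$. Your proposed route---average over $G=\Inn(X)$, then ``refine'' to the image of $\psi$ via a filtration and a spectral sequence---does not close the argument as written. Averaging over $G$ projects onto $C^R_\bullet(X;\Q)^G$, which is spanned by diagonal $G$-orbits on $X^i$; this is in general \emph{strictly larger} than $\operatorname{im}(\psi)$, because $G$-orbits on $X^i$ are finer than fibres of $X^i \to \pi_0(X)^i$. So even after establishing that $G$ acts trivially on homology, you still owe a computation of $H_\ast\bigl((C^R)^G\bigr)$, and your ``explicit contracting homotopy on the kernel of orbit-summation, built from coset representatives'' is asserted rather than constructed. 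The actual Etingof--Gra\~na argument is more direct: the map $s_y(x_1,\ldots,x_i)=(x_1,\ldots,x_i,y)$ satisfies $\partial s_y - s_y\partial = \pm(\mathrm{id}-\rho_y)$, where $\rho_y$ is the diagonal action of $(\,\cdot\,)\rhd y$; this single identity both shows $\Inn(X)$ acts trivially on $H^R_\ast$ and, when exploited over a ring in which $|\Inn(X)|$ is invertible, drives the full Betti-number computation without the filtration machinery you propose. If you want to turn your sketch into a proof, I would replace the vague spectral-sequence paragraph with this homotopy and follow \cite{EG} for the short remaining step.
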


Note that the above proven was proven in the provided generality by Etingof and Gra\~na in \cite{EG}. Litherland and Nelson had proven the statement for a certain class of racks in \cite{LN}. One benefit of the techniques used in this paper is that they allow us to study the homology groups with very general coefficients.\\

\subsection{The Representation Theory of Categories}

\begin{definition}
Let $\C$ be a (small) category, and let $k$ be a commutative ring. A \textbf{representation of $\C$ over $k$}, or a \textbf{$\C$-module over $k$}, is a covariant functor $V:\C \rightarrow k\Mod$.
\end{definition}

The representation theory of categories has seen a recent boom in the literature (see \cite{CEF,CEFN,EW-G,HR,MW,SS,PS} for a small taste). Much of this traces back to the incredible success of $\FI$-modules, as defined by Church, Ellenberg, and Farb in \cite{CEF}. In this work will be applying the representation theory of $\C$-modules to quandle and rack homology.\\

\begin{definition}\label{catdef}
The category $\FI$ is that whose objects are the finite sets $[n] := \{1,\ldots,n\}$ and whose morphisms are injections. For a fixed finite field $F := \F_q$, we define $\VIC(q)$ to be the category whose objects are the vector spaces $F^n, n = 0,1,\ldots$, and whose morphisms are pairs $(f,W):F^n \rightarrow F^m$ such that $f:F^n \rightarrow F^m$ is a linear injection, and $W \subseteq F^m$ has the property that $f(F^n) \oplus W  = F^m$.
\end{definition}

\begin{remark}
If $V$ is either and $\FI$-module, or a $\VIC(q)$-module, we will use $V_n$ to denote $V([n])$ (resp. $V(F^n)$). Note that $\FI$-modules were introduced by Church, Ellenberg, and Farb in \cite{CEF}, while $\VIC(q)$-modules were introduced by Djament in \cite{D}, and expanded upon greatly by Putman and Sam in \cite{PS}.\\
\end{remark}

One immediately observes that if $\C = \FI$, then $\Aut([n]) = \Sn_n$, the symmetric group on $n$ letters. Similarly, if $\C = \VIC(q)$, then $\Aut(F^n) = GL_n(q)$. This therefore suggests the following interpretation of representations of these categories. We imagine a $\C$-modules as sequence of $\Sn_n$, or $GL_n(q)$, representations, with $n$ varying. These representations are then given some kind of compatibility through the actions of the maps induced from the morphisms of $\C$.\\

\begin{remark}\label{more}
Looking through the literature, one will note that there are many more categories other than $\FI$ and $\VIC(q)$ which have been studied. For instance, if $G$ is a finite group then one may consider $\FI_G$-modules (see \cite{SS2,W,Ca,G}, for a few examples). In this case, the acting groups are the wreath product $\Sn_n \wr G$. One may also consider the category $\FI^m$, whose acting groups are products of symmetric groups $\Sn_{n_1} \times \ldots \times \Sn_{n_m}$ \cite{G,LY}. We therefore stress the following: \emph{The analyses and results which will be discussed in Sections \ref{fi} and \ref{vic} should have analogs for many other important categories}. For sake of brevity, as well as to avoid repeating arguments, we will only work with $\FI$ and $\VIC(q)$-modules. We hope that this work can be used as inspiration for applying certain arguments to the study of quandles and quandle homology.\\
\end{remark}

Many of the definitions and statements which follow will make sense for both $\FI$ and $\VIC(q)$-modules. \emph{For the remainder of this work, we will reserve $\C$ to denote either the category $\FI$ or the category $\VIC(q)$.} We will, of course, point out cases where the two categories need to be differentiated.\\

Many natural notions from the study of $k$-modules can be translated to the language of $\C$-modules.\\

\begin{definition}\label{fg}
Let $V$ be a $\C$-module, and let $d \geq 0$ be an integer. The category of $\C$-modules and natural transformations is an abelian category, with the usual abelian operations defined point-wise. A \textbf{submodule} of $V$ is a $\C$-module $W$ along with an injective morphism $W \hookrightarrow V$. We say that $V$ is \textbf{finitely generated in degree $\leq d$} if there is a finite set $\{v_i\} \subseteq \oplus_{n = 0}^d V_n$ which is not contained in any proper submodule of $V$.\\
\end{definition}

For many applications, it is often useful to limit our scope to finitely generated $\C$-modules and their properties. Before we detail these properties, we spend a moment examining some examples.\\

\begin{example}\label{free}
Fix a non-negative integer $r$ and a commutative ring $k$. We define the \textbf{principal projective module generated in degree $r$}, $M(r)$, by setting
\[
M(r)_n = k[\Hom_{\C}(r,n)],
\]
the free $k$-module on the set $\Hom_{\C}(r,n)$. For any morphism $\phi \in \Hom_{\C}(n,m)$, the map $M(r)(f)$ is defined on basis vectors by composition. It can be seen that $M(r)$ is generated in degree $r$ by the basis vector $id_r$.\\

More generally, let $W$ be an $\Sn_r$-representation over $k$ (resp. a $GL_n(q)$-module over $k$). Then we define the $\FI$-module (resp. $VIC(q)$-module), $M(W)$, via the assignments
\[
M(W)_n = M(r)_n \otimes W,
\]
where the tensor product is over $k[\Sn_r]$ (resp. $k[GL_n(q)]$). Direct sums of modules of the form $M(W)$ are known as \textbf{free modules.}\\
\end{example}

\begin{example}
For a more topologically motivated example, let $\M$ denote an oriented manifold of dimension $\geq 2$.  The $n$-strand configuration space on $\M$ is the space
\[
\Conf_n(\M) := \{(x_1,\ldots,x_n) \in \M^n \mid x_i \neq x_j\}.
\]
For any injection of sets $f:[n] \hookrightarrow [m]$, we obtain a continuous map $\Conf_m(\M) \rightarrow \Conf_n(\M)$ given by forgetting points in a way consistent with $f$. For any fixed index $i$, we may compose with the functor $H^i(\dt)$ to obtain an $\FI$-module over $\Z$
\[
H^i(\Conf_\dt(\M)).
\]
It is a theorem of Church \cite{Ch}, which was later expanded upon by Church, Ellenberg, and Farb \cite{CEF}, that the $\FI$-module $H^i(\Conf_\dt(\M))$ is actually finitely generated. We will soon see the plethora of non-trivial facts that this implies about the cohomology groups.\\
\end{example}

\begin{example}
Let $K$ be an algebraic number field (i.e. a finite field extension of $\Q$), and let $\Ob_K$ denote its ring of integers (i.e. the integral closure of $\Z$ in $K$). For any maximal ideal $\mi \subseteq \Ob_K$, the quotient $\Ob_K/\mi$ is a finite field. Number theorists are often concerned with the congruence subgroup,
\[
GL_n(\Ob_K,\mi) := \ker(GL_n(\Ob_K) \rightarrow GL_n(\Ob_K/\mi)).
\]
It was proven by Putman and Sam \cite{PS} that for each fixed index $i$ the collection
\[
H_i(GL_\dt(\Ob_K,\mi))
\]
can be endowed with the structure of a finitely generated $\VIC(|\Ob_K/\mi|)$-module.\\
\end{example}

If $k$ is a commutative ring, then one finds it is often times massively useful to know that $k$-modules satisfy a kind of Noetherian property. Namely, while performing homological computations wherein modules are often constructed as subquotients of other modules, one would like to be able to say something about finite generation. For this purpose, we have the following theorem.\\

\begin{theorem}[Church, Ellenberg, Farb, \& Nagpal, \cite{CEFN}; Putman \& Sam, \cite{PS}]
Let $k$ be a Noetherian ring, and let $V$ be a finitely generated $\C$-module. Then every submodule of $V$ is also finitely generated.\\
\end{theorem}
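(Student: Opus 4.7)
The plan is to reduce the statement to showing that each principal projective module $M(r)$ from Example \ref{free} is Noetherian, and then to establish that Noetherianity via a Gröbner-style well-partial-ordering argument on the morphism sets of $\C$.

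First I would reduce to the principal projective case. Since $V$ is finitely generated, there is a surjection $\pi : \bigoplus_{i=1}^{\ell} M(r_i) \twoheadrightarrow V$ from a finite direct sum of principal projectives. Any submodule $W \subseteq V$ pulls back to a submodule $\pi^{-1}(W) \subseteq \bigoplus_i M(r_i)$, and a finite generating set for $\pi^{-1}(W)$ maps under $\pi$ to a finite generating set for $W$. Since finite direct sums of Noetherian modules over a Noetherian base are Noetherian, it suffices to prove that each individual $M(r)$ is Noetherian whenever $k$ is.

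Second, to prove Noetherianity of $M(r)$, I would equip the disjoint union $\coprod_n \Hom_{\C}(r, n)$ of basis sets with a partial order $\leq$ enjoying two properties: (a) compatibility with the $\C$-action, in the sense that $\alpha \leq \beta$ implies $\gamma \circ \alpha \leq \gamma \circ \beta$ for every admissible $\gamma$; and (b) the well-partial-order property, that every antichain is finite. Any sub-$\C$-module $N \subseteq M(r)$ determines (after choosing a term order on $k$) a set of ``leading'' basis morphisms, and compatibility forces this set to be upward-closed in the disjoint union. In a well-partial-order, an upward-closed set has only finitely many minimal elements, which then supply a finite generating set for $N$. When $\C = \FI$, the order is built by encoding an injection $[r] \hookrightarrow [n]$ as a length-$n$ word over the alphabet $\{0, 1, \ldots, r\}$ (with $0$ marking positions not in the image), and Higman's classical lemma on subword orderings over a finite alphabet delivers the well-partial-order.

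Third, for $\C = \VIC(q)$, the same skeleton is executed after passing to an ordered variant in which one records an ordered basis for the image of $f$ together with an ordered basis for the chosen complement $W \subseteq F^m$. The morphism data then become sequences of vectors in $F^n$ satisfying linear independence constraints, and a Higman-type well-ordering adapted to this richer alphabet supplies the required well-partial-order. The main obstacle of the whole argument sits exactly here: in the $\VIC(q)$ case, one must rule out infinite antichains that could arise from the linear-algebraic interactions between successive vectors and the additional complement data, and this is substantially more delicate than the $\FI$ case. This is precisely the technical heart of Putman and Sam's analysis, and my plan relies on their combinatorial lemma to finish the proof.
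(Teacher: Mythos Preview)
The paper does not supply its own proof of this theorem: it is quoted as a black-box result, with the surrounding paragraph attributing the $\FI$ case to \cite{CEFN} (and in characteristic~0 to \cite{Sn,CEF}) and the $\VIC(q)$ case to \cite{PS}. So there is nothing in the paper for your proposal to be compared against directly.

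That said, your sketch is a faithful outline of the Gr\"obner/well-partial-order strategy developed by Sam and Snowden in \cite{SS} and, for $\VIC(q)$, by Putman and Sam in \cite{PS}. Two remarks. First, the proof in \cite{CEFN} for $\FI$ does \emph{not} proceed via Higman's lemma; it instead uses the shift functor and an induction on a combinatorial invariant of the module, so your $\FI$ argument is genuinely a different route from the one the paper credits. Both approaches are valid; the Gr\"obner method is more uniform across categories, while the shift-functor argument is more hands-on and yields additional structural information specific to $\FI$. Second, your step ``$N$ determines a set of leading basis morphisms'' is only literally correct when $k$ is a field; over a general Noetherian $k$ one must track the ideal of leading coefficients attached to each initial morphism, and finiteness of generators then also uses Noetherianity of $k$. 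This is handled in the cited references, but as written your sketch elides it.
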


The above Noetherian property was proven for $\FI$-modules in the case wherein $k$ is a field of characteristic 0 by Snowden \cite{Sn}, and independently by Church, Ellenberg, and Farb \cite{CEF}. It was proven in the above generality for $\FI$-modules by Church, Ellenberg, Farb, and Nagpal in \cite{CEFN}. It was proven for $\VIC(q)$-modules by Putman and Sam in \cite{PS}.\\

Computing many useful properties of finitely generated $\C$-modules depends on computing the generating degree of the module. However, because of the non-constructive nature of the Noetherian property, this isn't always possible to do. If $W$ is a $\C$-module which arises as a subquotient of a module which is finitely generated in degree $\leq d$, then one can often bound invariants of $W$ using $d$, though these bounds might not be optimal. We therefore have the following definition, which we borrow from \cite{PY}.\\

\begin{definition}
We say that a $\C$-module over a commutative ring $k$ is \textbf{$d$-small} if it is a subquotient of a $\C$-module which is finitely generated in degree $\leq d$. Note that while $d$-small modules are finitely generated whenever $k$ is Noetherian, the degree of generation is apriori independent of $d$.\\
\end{definition}

\begin{theorem}\label{maincor}
Let $k$ be a Notherian ring, and let $V$ be a $\C$-module which is $d$-small.
\begin{enumerate}
\item $[$Church, Ellenberg, Farb, \& Nagpal, \cite{CEFN}$]$ If $k$ is a field, and $\C = \FI$, then there exists a polynomial $p_V(T) \in Q[T]$ of degree $\leq d$ such that for all $n \gg 0$,
\[
p_V(n) = \dim_kV_n.
\]
\item $[$Gan \& Watterlond, \cite{GW}$]$ If $k$ is a field of characteristic 0, and $\C = \VIC(q)$, then there exists a polynomial $p_V(T) \in \Q[T]$ of degree $\leq d$ such that for all $n \gg 0$
\[
p_V(q^n) = \dim_k V_n.
\]
\item For each $n$, let $\ai_n \subseteq k$ be the ideal generated by non-zero-divisors which annihilate $V_n$. Then for $n \gg 0$, $\ai_n$ is independent of $n$. In particular, if $k = \Z$, then there exists an integer $e_{V}$, independent of $n$, such that $e_V$ is the exponent of the abelian group $V_n$ for $n \gg 0$.
\end{enumerate}
\end{theorem}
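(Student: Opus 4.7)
My plan is to handle each of the three parts of the theorem separately. A preliminary observation used throughout is that, by the Noetherianity theorem quoted above, a $d$-small module over a Noetherian ring is itself finitely generated: writing $V = W_1/W_2$ as a subquotient of some $V'$ finitely generated in degree $\leq d$, Noetherianity gives that $W_1, W_2$, and hence $V$, are all finitely generated.

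For parts (1) and (2) the idea is to reduce each to the cited polynomial-growth theorem (CEFN for $\FI$, Gan--Watterlond for $\VIC(q)$) by a sandwiching argument. Applying the cited theorem to $W_1$ and $W_2$ gives polynomials $p_{W_1}, p_{W_2}$ describing $\dim_k W_{1,n}$ and $\dim_k W_{2,n}$ for $n \gg 0$, and applying it to $V'$ itself yields a polynomial $p_{V'}$ of degree at most $d$. The inclusions $W_2 \subseteq W_1 \subseteq V'$ force $p_{W_i}(n) \leq p_{V'}(n)$ for $n$ large, so $\deg p_{W_i} \leq d$. The required polynomial is then $p_V := p_{W_1} - p_{W_2}$.

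For part (3), I would give a direct argument that avoids any deep structure theorem. Suppose $V$ is generated in degree $\leq d'$, and let $\iota$ denote either the standard inclusion $[n] \hookrightarrow [n+1]$ in $\FI$ or the pair consisting of the standard inclusion $F^n \hookrightarrow F^{n+1}$ together with the complement $\langle e_{n+1}\rangle$ in $\VIC(q)$. The key lemma is the following factorization: for $n \geq d'$, every morphism in $\C$ from an object indexed by some $r \leq d'$ to the object indexed by $n+1$ can be written as $\sigma \circ \iota \circ g$ for some $\sigma \in \Aut_{\C}(n+1)$ and some morphism $g$ with target indexed by $n$. In $\FI$ this is the pigeonhole observation that the image has size at most $d' \leq n$, so some permutation sends it into $[n]$. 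In $\VIC(q)$ one additionally has to match the complement datum of the morphism with $\langle e_{n+1}\rangle$, which is arranged by first picking a line $L$ inside the morphism's complement and then choosing $\sigma$ to send $e_{n+1}$ to a generator of $L$ while carrying $\iota(F^n)$ onto an $n$-dimensional subspace containing the morphism's image.

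Applying this lemma to each term in an expression of an arbitrary element of $V_{n+1}$ as a $k$-linear combination of images of generators shows that, for $n \geq d'$, the module $V_{n+1}$ is generated as a $k[\Aut_{\C}(n+1)]$-module by $V(\iota)(V_n)$. Because the $k$-action commutes with the $\Aut_{\C}(n+1)$-action, this gives
\[
\mathrm{Ann}_k(V_{n+1}) \;=\; \mathrm{Ann}_k(V(\iota)(V_n)) \;\supseteq\; \mathrm{Ann}_k(V_n),
\]
so the chain $\{\mathrm{Ann}_k(V_n)\}$ is eventually non-decreasing, hence stabilizes by the Noetherianity of $k$; the sub-ideal $\ai_n$ generated by its non-zero-divisors stabilizes along with it. The main obstacle in the plan is the $\VIC(q)$ version of the factorization lemma, which requires a small linear-algebra computation to accommodate the complement data in $\VIC(q)$-morphisms; once that is in place, the remainder of part (3) is essentially formal.
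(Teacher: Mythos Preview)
Your argument is correct, but for part (3) it takes a genuinely different route from the paper. The paper does not analyze annihilator ideals of $V$ directly; instead it passes to the torsion sub-$\C$-module $T \subseteq V$ defined by $T_n = \{v \in V_n : xv = 0 \text{ for some non-zero-divisor } x\}$, observes that $T$ is a $\C$-submodule, and then invokes the Noetherian property for $\C$-modules to conclude that $T$ is finitely generated, so that only the annihilators of finitely many generators matter. Your approach instead proves an explicit factorization lemma (every morphism into degree $n+1$ factors through the standard inclusion up to an automorphism, once $n$ exceeds the generating degree), uses it to show $\mathrm{Ann}_k(V_n) \subseteq \mathrm{Ann}_k(V_{n+1})$ eventually, and then appeals only to Noetherianity of the base ring $k$ to stabilize this ascending chain. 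Your $\VIC(q)$ factorization is correct: choosing a line $L \subseteq W$ and a complement $H$ of $L$ containing $f(F^r)$ gives $W = L \oplus (W \cap H)$ and $H = f(F^r) \oplus (W \cap H)$, from which the required $(g,W')$ is read off. The paper's approach is shorter and avoids the case analysis for $\VIC(q)$, while yours is more self-contained (the factorization lemma is a reusable structural fact), works directly with $V$ rather than a submodule, and makes the eventual monotonicity of $\mathrm{Ann}_k(V_n)$ explicit.

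For parts (1) and (2) the paper simply cites \cite{CEFN} and \cite{GW} without further comment; your sandwiching argument (bounding $\deg p_{W_i}$ by comparison with $p_{V'}$) is a correct and more explicit way to extract the degree bound $\leq d$ for a $d$-small module from the finitely-generated versions of those theorems.
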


\begin{proof}
It only remains to prove the third statement. For each $n$, let $T_n \subseteq V_n$ be the submodule of elements which are annihilated by some non-zero-divisor $x \in k$. It isn't hard to see that the collection of $T_n$ constitute a submodule of $V_n$. By the Noetherian property for $\C$-modules, we know that $T$ is finitely generated. In this case, we only need to worry about those elements of $k$ which annihilate the (finitely many) generators of $T$.\\
\end{proof}

In the case where $k$ is a field of characteristic 0, Church, Ellenberg, and Farb prove the first statement of the above theorem \cite{CEF}. In that work, they also give bounds on when the claimed polynomial behavior actually starts. Work of the author \cite{R}, as well as the Li and the author \cite{LR}, provide bounds on when the polynomial behavior begins if $k$ is any field. To the knowledge of the author, the second statement of the above theorem has never been made effective.\\

\begin{remark}
It is the belief of the author that the second statement should be true so long as $k$ is a field of characteristic prime to $p$. The second statement is provably false if $k$ is a field of characteristic $p$. Indeed, consider the $\VIC(q)$-module over $F := \F_q$, $V$, given by
\[
V_n = F^n
\]
where the action of the morphisms of $\VIC(q)$ is the obvious one.\\
\end{remark}

\subsection{Quandle colorings of knots and links}

Recall that we can associate to any link a quandle $\K(L)$, called the fundamental quandle of $L$ (see Example \ref{funquandle}). As a consequence, if $X$ is any quandle, we may consider the set
\[
\Hom_{\text{quandle}}(\K(L),X)
\]

\begin{definition}
If $X$ is a quandle and $L$ is a link, then a \textbf{coloring of $L$ by $X$} is a quandle homomorphism $\phi:\K(L) \rightarrow X$. The \textbf{quandle counting invariant of $L$} is the function
\[
\chi_L(X) := |\Hom_{\text{quandle}}(\K(L),X)|
\]
\end{definition}

\begin{example}
Let $L = 3_1$ be the trefoil knot as pictured in Figure \ref{arcs}, and let $X$ be $\Z/3\Z$ be the dihedral quandle of Example \ref{dihedral}. Then a coloring of $3_1$ by $X$ is equivalent to an ordered triple $(x,y,z) \in (\Z/3\Z)^3$ such that $x+y+z = 0 \pmod{3}$. This is often times stated in the following way: a coloring of a knot by the dihedral quandle $\Z/3\Z$ is a way to assign one of three colors to each arc in such a way that at any crossing the three arcs involved are all the same color, or are all of differing colors. We conclude that $\chi_{3_1}(X) = 9$.\\

Colorings of links by the dihedral quandle $\Z/n\Z$ have been studied extensively. See \cite{P} for a survey, including connections of $\chi_{L}(\Z/n\Z)$ with the Jones and Kaufmann polynomial invariants.\\
\end{example}

Quandle colorings have been studied very extensively, as in many cases they provide fairly easily computable invariants. For instance, the classic Alexander Polynomial invariant can be realized by studying certain quandle colorings. For a small sampling of of results in this direction, see \cite{CESY, EK, EN}.\\

The Yoneda lemma implies that knowing all possible quandle colorings of a link $L$ uniquely determines the fundamental quandle of $L$. In the cases wherein $L$ is a knot, the theorem of Joyce \cite{J} further implies that all possible quandle colorings of a knot determine the knot. In fact, it has been conjectured that there is some infinite sequence of finite quandles whose colorings are sufficient to distinguish any two knots \cite{CESY}.

\begin{conjecture}[Clark, Elhamdadi, Saito, \& Yeatman, \cite{CESY}]
There exists a sequence of finite quandles $S = (Q_1, Q_2, \ldots , Q_n, \ldots)$ such that the
invariant
\[
C(L) := (\chi_{L}(Q_1),\chi_{L}(Q_2), \ldots, \chi_{L}(Q_n),\ldots)
\]
satisfies for all knots K, K'
\[
C(K') = C(K) \text{ if and only if } \K(K') = \K(K).
\]
\end{conjecture}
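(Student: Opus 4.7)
The plan is to attack this conjecture using the $\FI$-quandles constructed in Theorem A. Specifically, I would propose taking $S$ to be an enumeration of the conjugacy class quandles $c^n_\lambda$, where $\lambda$ ranges over partitions with no 1-cycles and $n \geq |\lambda|$ ranges over sufficiently large integers, possibly augmented by the analogous family arising from $\VIC(q)$-conjugacy classes in $GL_n(q)$. The guiding philosophy of the paper is that these sequences capture the asymptotic behavior of conjugacy classes in a highly structured way, and one may hope that this structure is rich enough to distinguish knots.

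First I would apply Theorem C to compress the data: for each fixed $\lambda$ the function $n \mapsto \chi_L(c^n_\lambda)$ is eventually polynomial in $n$, so the entire sequence is encoded by finitely many polynomial coefficients together with finitely many exceptional values for small $n$. This reduces the potentially intractable comparison of two infinite sequences to the comparison of finite packets of combinatorial data per partition $\lambda$. Next I would set up a reconstruction strategy: given the polynomial invariants $p_{L,\lambda}$ for all $\lambda$, try to recover $\K(L)$ itself, for example by using the coloring counts to read off $|\Hom_{\text{quandle}}(\K(L),X)|$ for every finite quandle $X$ appearing as a subquandle or quotient of some $c^n_\lambda$. If every finite quotient quandle of $\K(L)$ could be realized in this way, then these coloring counts would recover $\K(L)$ as an inverse limit of its finite quotients, and Joyce's theorem would conclude the argument for knots.

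The main obstacle, and the reason the statement appears as a conjecture rather than a theorem, is precisely the step I have just black-boxed. One needs a residual finiteness result for $\K(L)$, together with the structural claim that the relevant finite quotient quandles embed into symmetric (or linear) conjugacy class quandles of the form $c^n_\lambda$. Neither is established in the literature; residual finiteness of fundamental quandles is itself a difficult open question, and even granting it, the embedding step would require classification-type information about finite quandles that is currently unavailable. My proposal is therefore a roadmap rather than a proof: the $\FI$- and $\VIC(q)$-machinery of this paper reduces the problem of distinguishing knots by the sequence $C(L)$ to the finite-data question of distinguishing the polynomials $p_{L,\lambda}$, but the genuinely hard content of the conjecture — that this data is actually sufficient — would require substantial new input from knot theory and the structure theory of finite quandles rather than from representation stability alone.
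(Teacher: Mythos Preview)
The statement you are attempting to prove is labeled a \emph{conjecture} in the paper, attributed to Clark, Elhamdadi, Saito, and Yeatman, and the paper offers no proof of it whatsoever. The paper explicitly says that this conjecture is merely the \emph{inspiration} for the link invariants $V^{L,\bm{\lambda}}$ constructed later; it does not claim to resolve it, nor does it suggest that the $\FI$- or $\VIC(q)$-machinery developed here is sufficient to do so. So there is no ``paper's proof'' against which to compare your proposal.

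To your credit, you recognize this yourself: you correctly identify that the reconstruction step --- recovering $\K(L)$ from the coloring counts by conjugacy-class quandles --- requires residual finiteness of fundamental quandles together with an embedding result for finite quandles into symmetric or linear conjugacy quandles, neither of which is available. Your final paragraph is an honest assessment that what you have written is a roadmap, not a proof. That assessment is accurate, and it matches the status the paper assigns to the statement: open. There is nothing to correct in your mathematics, but there is also no proof here, and you should not present this as one.
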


In a way, this conjecture is the inspiration for the knot invariants constructed in the later sections of this work. Namely, if $X$ is a finite quandle, then one expects that the counting invariant $\chi_L(X)$ will only take on a limited list of values as $L$ varies. This suggests that the counting invariant of a finite quandle is not particularly strong. However, one would like to work with finite quandles, as they make computations significantly more tractable. We therefore construct an infinite sequence of finite quandles whose colorings can be embedded into a single object, which is finitely generated in the appropriate sense. In other words, for a given link $L$ we hope to construct finitely generated $\C$-modules which encode some infinite collection of quandle colorings of $L$ by finite quandles.\\

\section{Some $\FI$-quandles}\label{fi}

\subsection{Definition and basic properties}

To begin, we spend a moment recalling some facts about conjugacy classes of the symmetric groups $\Sn_m$.\\

\begin{definition}\label{primitive}
Let $\lambda$ be a partition of $m$. Then $\lambda$ corresponds to a conjugacy class $c_\lambda$ of $\Sn_m$. Under this correspondence, an element of $c_\lambda$ has cycle structure given by
\[
\text{\# of $i$-cycles} = |\{j \mid \lambda_j = i\}|.
\]
We say that a conjugacy class $c_\lambda$ is \textbf{primitive} if $\lambda_i \neq 1$ for all $i$. We also say a partition $\lambda$ is \textbf{primitive} under the same circumstances.\\

Given a partition $\lambda$ of $m$ and $r \geq 0$, define $\lambda^r$ to be the partition of $m+r$
\[
\lambda^r := (\lambda,\underbrace{1,1,\ldots,1}_{r \text{ times}})
\]
If $c_\lambda$ is a primitive conjugacy class of $\Sn_m$ and $n \geq m$, we write $c^n_\lambda$ for $c_{\lambda^{n-m}}$. If $n < m$, then we set $c^n_\lambda = \emptyset$.\\
\end{definition}

Given a group $G$, any union of conjugacy classes of $G$ carries the structure of a quandle under conjugation. In the case of the symmetric groups, if $c_\lambda$ is a primitive conjugacy class of $\Sn_m$, then one may consider $c^n_{\lambda}$, with $n \geq m$, as the ``same'' conjugacy class, just transported to $\Sn_n$. This sameness is the basis for many of the finite generation theorems which will follow.\\

\begin{definition}
Let $\Par$ be the set integer partitions (not necessarily of the same integer), and let $\bm{\lambda} \subseteq \Par$ be a finite set of primitive partitions. For each non-negative integer $n$ we define the quandle
\[
c^n_{\bm{\lambda}} := \cup_{\lambda \in \bm{\lambda}}c^n_{\lambda} \subseteq \Sn_n
\]
If $f:[m] \hookrightarrow [n]$ is an injection, then we obtain a map of quandles $f_\as:c^m_{\bm{\lambda}} \rightarrow c^n_{\bm{\lambda}}$ via the assignment
\[
f_\as(\sigma)(i) = \begin{cases} f\circ \sigma \circ f^{-1}(i) & \text{ if $i \in \im(f)$}\\ i & \text{ otherwise.}\end{cases}
\]
With the above, the assignment $n \mapsto c^n_{\bm{\lambda}}$ defines a functor $c^\dt_{\bm{\lambda}}$ from $\FI$ to the category of quandles.
\end{definition}

In their work \cite{CESY}, Clark, Elhamdadi, Saito, and Yeatman construct a list of 26 finite quandles whose colorings can distinguish all prime (i.e. not expressible as a connect-sum of non-trivial knots) knots with less than 12 crossings, up to certain symmetries. Some quandles on this list are of the form $c^n_{\bm{\lambda}}$ for some choice of $n$ and $\bm{\lambda}$, although this language is not used in that work. It has also been shown \cite{Cl2,HMN} that conjugation quandles of the symmetric groups are useful in classifying finite quandles.\\

\subsection{Proving that the homology is finitely generated}

In this section we focus on proving that for any choice of $\bm{\lambda}$, any fixed index $i$, and any commutative Noetherian ring $k$, the homology groups $H^R_i(c^\dt_{\bm{\lambda}};k)$ and $H^Q_i(c^\dt_{\bm{\lambda}};k)$ form finitely generated $\FI$-modules. First, we need some notation. If $\lambda$ is a partition of $m$, then we write $|\lambda| = m$. If $\bm{\lambda} \subseteq \Par$ is a finite union of primitive partitions, then we write 
\[
|\bm{\lambda}| = \max_{\lambda \in \bm{\lambda}}\{|\lambda|\}
\]

\begin{theorem}\label{symhomfg}
Let $\bm{\lambda} \subseteq \Par$ be a finite set of primitive partitions, let $i \geq 0$ be a fixed integer, and let $k$ be a commutative Noetherian ring. Then the $\FI$-modules $H^R_i(c^\dt_{\bm{\lambda}};k)$ and $H^Q_i(c^\dt_{\bm{\lambda}};k)$ are finitely generated. Moreover, both $H^R_i(c^\dt_{\bm{\lambda}};k)$ and $H^Q_i(c^\dt_{\bm{\lambda}};k)$ are $(i\cdot |\bm{\lambda}|)$-small.
\end{theorem}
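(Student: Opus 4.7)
The plan is to realize the chain modules $C_i^R(c^\dt_{\bm{\lambda}};k)$ and $C_i^Q(c^\dt_{\bm{\lambda}};k)$ as $\FI$-modules that are \emph{finitely generated in degree $\leq i \cdot |\bm{\lambda}|$} (in fact, free in the sense of Example \ref{free}), and then pass to homology using the Noetherian property for $\FI$-modules.

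First I would unpack the $\FI$-module structure on the chain level. Setting $C_i^R(c^n_{\bm{\lambda}};k) = k[(c^n_{\bm{\lambda}})^i]$, an injection $f\colon [m] \hookrightarrow [n]$ induces the map $f_\as\colon c^m_{\bm{\lambda}} \to c^n_{\bm{\lambda}}$ defined in the excerpt. I would check directly that $f_\as$ is a quandle homomorphism: on the image of $f$ it is conjugation by $f$, and outside the image everything is fixed, so $f_\as(\sigma \rhd \tau) = f_\as(\tau)f_\as(\sigma)f_\as(\tau)^{-1} = f_\as(\sigma) \rhd f_\as(\tau)$. Applying $f_\as$ coordinate-wise makes $C_i^R$ an $\FI$-module; the rack differential $\partial_i$ of Definition \ref{homology} is then $\FI$-equivariant \emph{precisely because} $f_\as$ is a quandle morphism. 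The quandle quotient, and hence $C_i^Q$, is also $\FI$-equivariant, as the relations defining it are stable under $f_\as$.

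The main structural input is a support calculation. Because each $\lambda \in \bm{\lambda}$ is primitive, every $\sigma \in c^n_\lambda$ has support $\sink(\sigma) := \{k \in [n] : \sigma(k) \neq k\}$ of size exactly $|\lambda|$. Hence any tuple $(\sigma_1,\ldots,\sigma_i) \in (c^n_{\bm{\lambda}})^i$ has combined support $T = \bigcup_j \sink(\sigma_j)$ with $|T| \leq i \cdot |\bm{\lambda}|$. Setting $t = |T|$ and choosing any bijection $[t] \to T$ gives an injection $f\colon [t] \hookrightarrow [n]$; since every $\sigma_j$ fixes $[n] \setminus T$, the permutations $\tilde\sigma_j := f^{-1}\sigma_j f$ lie in $c^t_{\bm{\lambda}}$, and the original tuple equals $f_\as(\tilde\sigma_1,\ldots,\tilde\sigma_i)$. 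Thus every basis element of $C_i^R(c^n_{\bm{\lambda}};k)$ lies in the image from some degree $\leq i \cdot |\bm{\lambda}|$. Grouping tuples by their combined support exhibits $C_i^R(c^\dt_{\bm{\lambda}};k)$ as a finite direct sum of free $\FI$-modules of the form $M(W)$, one summand for each $\Sn_t$-orbit of fixed-point-free $i$-tuples on $[t]$ with cycle types drawn from $\bm{\lambda}$, with $t \leq i \cdot |\bm{\lambda}|$.

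To finish, observe that $C_i^Q(c^\dt_{\bm{\lambda}};k)$ is an $\FI$-module quotient of $C_i^R(c^\dt_{\bm{\lambda}};k)$, and that $H_i^R$, $H_i^Q$ are subquotients (kernels modulo images of $\FI$-equivariant differentials) of $C_i^R$ and $C_i^Q$ respectively. All four are therefore $(i \cdot |\bm{\lambda}|)$-small by definition, and the Noetherian theorem of Church--Ellenberg--Farb--Nagpal cited in the excerpt upgrades smallness to finite generation over any Noetherian $k$. The main obstacle is bookkeeping: one must be disciplined about checking that $f_\as$ is a quandle homomorphism (so that $\partial_i$ is $\FI$-linear) and careful with the ``combined support'' argument when the tuple mixes elements from several different $c^n_\lambda$ with $\lambda \in \bm{\lambda}$. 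Once the support perspective is in hand, both points are essentially formal.
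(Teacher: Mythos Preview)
Your proposal is correct and follows essentially the same route as the paper: both argue that the combined support of an $i$-tuple in $(c^n_{\bm{\lambda}})^i$ has size at most $i\cdot|\bm{\lambda}|$, deduce that the chain $\FI$-modules $C_i^R$ and $C_i^Q$ are generated in degree $\leq i\cdot|\bm{\lambda}|$, observe that the differential is $\FI$-equivariant, and then invoke the Noetherian property. The only cosmetic differences are that the paper phrases the support bound as an ``$n$ to $n-1$'' step (finding a common fixed point when $n > i\cdot|\bm{\lambda}|$) rather than pulling back all the way to the support at once, and that your extra remark identifying $C_i^R$ as a direct sum of free modules $M(W)$ is not stated here in the paper (though a related freeness is established later via the $\FI_\sharp$ structure).
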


\begin{proof}
We will first prove the statement for the rack homology groups. For each $i \geq 1$, recall that if $X$ is a quandle we set $C_i(X;k)$ to be the free $k$-module on the set $X^i$ (and that $C_0(X,k) = 0$). We claim that $C_i(c^\dt_{\bm{\lambda}};k)$ is a finitely generated $\FI$-module, generated in degree $\leq i\cdot |\bm{\lambda}|$.\\

First observe that $(c^\dt_{\bm{\lambda}})^i$ inherits the structure of an $\FI$-quandle. By extending this action $k$-linearly, we obtain the natural structure of an $\FI$-module on $C_i(c^\dt_{\bm{\lambda}};k)$ to show that this $\FI$-module is finitely generated in degree $\leq i\cdot |\bm{\lambda}|$ it remains to show that if $n >i\cdot |\bm{\lambda}|$, then every basis element of $C_i(c^n_{\bm{\lambda}};k)$ arises as the image of a basis element of $C_i(c^{n-1}_{\bm{\lambda}};k)$ under the action of $\FI$. Let $(\sigma_1,\ldots,\sigma_i) \in C_i(c^n_{\bm{\lambda}};k)$. Observe that at most $i\cdot|\bm{\lambda}|$ total elements of $[n]$ are not fixed by all the $\sigma_j$. In particular, if $n > i \cdot |\bm{\lambda}|$, then there exists at least one element $l \in [l]$ which is fixed by all of the $\sigma_j$. Let $f:[n-1] \hookrightarrow [n]$ be the function defined by
\[
f(j) = \begin{cases} j &\text{ if $j < l$}\\ j+1 &\text{ if $j \geq l.$}\end{cases}
\]
Then $(\sigma_1,\ldots,\sigma_i)$ is the image of $(\tau_1,\ldots,\tau_i)$ under $f_\as$, where
\[
\tau_j = f^{-1} \circ \sigma_j \circ f.
\]
We next note that the differential $\partial$, commutes with the action of $\FI$. That is to say, the complex
\[
C_{\star}(c^\dt_{\bm{\lambda}};k): \ldots \rightarrow C_i(c^\dt_{\bm{\lambda}};k) \rightarrow C_{i-1}(c^\dt_{\bm{\lambda}};k) \rightarrow \ldots
\]
is actually a complex of $\FI$-modules. The Noetherian property now implies that the homology of this complex, $H_i(c^\dt_{\bm{\lambda}};k)$, is finitely generated as an $\FI$-module. The above computation also implies that the homology is $(i\cdot |\bm{\lambda}|)$-small.\\

The proof of the analogous statement for quandle homology is identical.\\
\end{proof}

Combining Theorem \ref{symhomfg} with Theorem \ref{maincor} we immediately obtain the following.\\

\begin{corollary}\label{symcor}
Let $\bm{\lambda} \subseteq \Par$ be a finite set of primitive partitions, let $i \geq 0$ be a fixed integer, and let $k$ be a Noetherian ring.
\begin{enumerate}
\item If $k$ is a field, then there exist polynomials $p_{R,i}(T),p_{Q,i}(T) \in Q[T]$ of degree $\leq i \cdot |\bm{\lambda}|$ such that for all $n \gg 0$,
\[
p_{R,i}(n) = \dim_k H^R_i(c^n_{\bm{\lambda}};k), \text{ and } p_{Q,i}(n) = \dim_k H^Q_i(c^n_{\bm{\lambda}};k)
\]
\item For each $n$, let $\ai_{R,i,n},\ai_{Q,i,n} \subseteq k$ be the ideals generated by non-zero-divisors which annihilate $H^R_i(c^n_{\bm{\lambda}};k)$ and $H^Q_i(c^n_{\bm{\lambda}};k)$, respectively. Then for $n \gg 0$, $\ai_{R,i,n}$ and $\ai_{Q,i,n}$ are independent of $n$. In particular, if $k = \Z$, then there exist integers $e_{R,i}$ and $e_{Q,i}$, independent of $n$, such that $e_{R,i}$ and $e_{Q,i}$ are the exponents of the abelian groups $H^R_i(c^n_{\bm{\lambda}};k)$, and $H^Q_i(c^\dt_{\bm{\lambda}})$, respectively, for $n \gg 0$.\\
\end{enumerate}
\end{corollary}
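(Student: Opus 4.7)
The plan is to derive this corollary directly from the combination of Theorem \ref{symhomfg} and Theorem \ref{maincor}, since all of the hard work has already been packaged into those two results. The key input from Theorem \ref{symhomfg} is not just that the $\FI$-modules $H^R_i(c^\dt_{\bm{\lambda}};k)$ and $H^Q_i(c^\dt_{\bm{\lambda}};k)$ are finitely generated, but the more refined statement that they are $(i\cdot|\bm{\lambda}|)$-small. This is precisely the hypothesis needed to invoke Theorem \ref{maincor} with an explicit bound on the degree of the polynomial $p_V(T)$.

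First I would apply Theorem \ref{maincor}(1) in the case $\C = \FI$, using $d = i\cdot|\bm{\lambda}|$ and $V = H^R_i(c^\dt_{\bm{\lambda}};k)$ (respectively $V = H^Q_i(c^\dt_{\bm{\lambda}};k)$). This gives polynomials $p_{R,i}(T), p_{Q,i}(T) \in \Q[T]$ of degree at most $i\cdot|\bm{\lambda}|$ computing the $k$-dimension of the respective homology groups for $n \gg 0$, yielding part (1). Then I would apply Theorem \ref{maincor}(3) to each of these $\FI$-modules, which provides the eventual stabilization of the ideals $\ai_{R,i,n}$ and $\ai_{Q,i,n}$ of non-zero-divisor annihilators, as well as the corresponding statement about the exponent when $k = \Z$. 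This yields part (2).

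There is no real obstacle: the corollary is purely a formal combination of the two theorems cited just above. The only small point to verify is the bookkeeping that the bound $d = i\cdot|\bm{\lambda}|$ from the ``$d$-small'' conclusion of Theorem \ref{symhomfg} matches the $d$ in the polynomial degree bound of Theorem \ref{maincor}(1), and that the Noetherian hypothesis on $k$ is exactly what is needed to invoke Theorem \ref{maincor}(3). Because of this, the proof should just consist of a single short paragraph citing both theorems in sequence.
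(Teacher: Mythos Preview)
Your proposal is correct and matches the paper's own approach exactly: the paper simply states that the corollary follows immediately by combining Theorem~\ref{symhomfg} with Theorem~\ref{maincor}, with no further argument given. Your observation that the $(i\cdot|\bm{\lambda}|)$-small conclusion of Theorem~\ref{symhomfg} is what supplies the degree bound in part~(1) is precisely the bookkeeping the reader is expected to fill in.
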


\subsection{Some $\FI$-module invariants of links}

In this section we use the $\FI$-quandles $c^\dt_{\bm{\lambda}}$ to construct invariants of links. Recall the quandle counting invariant $\chi_L(X) := |\Hom(\K(L),X)|$, where $X$ is a finite quandle, and $\K(L)$ is the fundamental quandle of $L$. In the case wherein $X = c^m_{\bm{\lambda}}$, one immediately observes that $\Hom(\K(L),X)$ is not only a set, it also caries the action of $\Sn_m$. Keeping track of this action will allow us to prove some non-trivial facts about $\chi_L(c^m_{\bm{\lambda}})$.\\

\begin{definition}
Let $\bm{\lambda} \subseteq \Par$ be a finite set of primitive partitions, and let $L$ be an oriented link. For each $n \geq 0$, let $V^{L,\bm{\lambda}}$ denote the rational $\FI$-module defined by,
\[
V^{L,\bm{\lambda}}_n := \Q[\Hom(\K(L),c^n_{\bm{\lambda}}],
\]
the rational vector space with basis vectors indexed by the set $\Hom(\K(L),c^n_{\bm{\lambda}})$.
\end{definition}

Our first goal will be to show that this module is finitely generated. To this end, we have the following.\\

\begin{proposition}\label{fginv1}
Let $\bm{\lambda} \subseteq \Par$ be a finite set of primitive partitions, and let $L$ be an oriented link. Then the $\FI$-module $V^{L,\bm{\lambda}}$ is finitely generated
\end{proposition}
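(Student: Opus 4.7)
The plan is to exhibit an explicit finite bound on the generating degree of $V^{L,\bm{\lambda}}$ in terms of the combinatorics of a diagram of $L$ and the size of $\bm{\lambda}$. The key observation is that $\K(L)$ is a \emph{finitely generated} quandle: fixing a diagram $D(L)$, the arcs $a_1,\ldots,a_r$ generate $\K(L)$. Consequently, any quandle homomorphism $\phi:\K(L)\to c^n_{\bm{\lambda}}$ is determined by the tuple $(\phi(a_1),\ldots,\phi(a_r)) \in (c^n_{\bm{\lambda}})^r$, and the assignment $\phi \mapsto (\phi(a_1),\ldots,\phi(a_r))$ is an injection from $\Hom(\K(L),c^n_{\bm{\lambda}})$ into $(c^n_{\bm{\lambda}})^r$ which is equivariant with respect to the action of $\FI$ morphisms.

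Next I would use the fact that every element $\sigma \in c^n_{\bm{\lambda}}$ has at most $|\bm{\lambda}|$ non-fixed points in $[n]$, since $\sigma$ is obtained from some primitive $\tau \in c_{\lambda}$ (for $\lambda \in \bm{\lambda}$) by adding $n-|\lambda|$ one-cycles. Hence for any homomorphism $\phi$, the union of the supports of $\phi(a_1),\ldots,\phi(a_r)$ has size at most $r\cdot|\bm{\lambda}|$. So if $n > r\cdot|\bm{\lambda}|$, there exists some $l \in [n]$ fixed by every $\phi(a_i)$. Letting $f:[n-1]\hookrightarrow[n]$ be the order-preserving injection whose image is $[n]\setminus\{l\}$, one checks that each $\phi(a_i)$ lies in the image of $f_\as:c^{n-1}_{\bm{\lambda}}\to c^n_{\bm{\lambda}}$, with unique preimage $\tau_i := f^{-1}\circ\phi(a_i)\circ f$. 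Since $f_\as$ is an injective quandle homomorphism, the assignment $a_i\mapsto\tau_i$ satisfies precisely the same crossing relations as $\phi$, and thus descends to a quandle homomorphism $\psi:\K(L)\to c^{n-1}_{\bm{\lambda}}$ with $f_\as\circ\psi=\phi$.

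Putting these pieces together, for $n > r\cdot|\bm{\lambda}|$ every basis vector of $V^{L,\bm{\lambda}}_n$ arises as the image, under the action of some $\FI$-morphism, of a basis vector from $V^{L,\bm{\lambda}}_{n-1}$. It follows that $V^{L,\bm{\lambda}}$ is generated in degree $\leq r\cdot|\bm{\lambda}|$, and in particular is finitely generated. The main obstacle is the verification that the pulled-back tuple $(\tau_1,\ldots,\tau_r)$ actually defines a quandle homomorphism $\psi$ rather than just a map on generators; this reduces to the observation that $f_\as$ is an injective morphism in the category of quandles, so all defining relations of $\K(L)$ transfer faithfully between $\phi$ and $\psi$. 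Everything else is a direct application of the support-counting argument used in the proof of Theorem \ref{symhomfg}.
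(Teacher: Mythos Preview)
Your argument is correct, and it actually proves more than the paper's own proof of this proposition. The paper simply embeds $V^{L,\bm{\lambda}}$ as a submodule of $C_l(c^\dt_{\bm{\lambda}};\Q)$ (where $l$ is the number of arcs in a diagram) and then invokes the Noetherian property for $\FI$-modules to conclude finite generation, with no explicit degree bound at this stage. You instead push the support-counting idea from Theorem \ref{symhomfg} one step further: you verify that the preimage tuple $(\tau_1,\ldots,\tau_r)$ still satisfies the crossing relations of $\K(L)$ because $f_\as$ is an injective quandle morphism, so the pulled-back element stays inside the submodule $V^{L,\bm{\lambda}}$. This yields the explicit bound $r\cdot|\bm{\lambda}|$ on the generating degree without any appeal to Noetherianity.

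In the paper, that same bound on generating degree is only obtained later, in Theorem \ref{fiinv}, by equipping $C_l(c^\dt_{\bm{\lambda}};\Q)$ with an $\FI_\sharp$-structure and invoking the Church--Ellenberg--Farb structure theorem for $\FI_\sharp$-modules. So your approach is more elementary and gives the degree bound immediately; what the paper's $\FI_\sharp$ argument buys in addition is \emph{freeness} of $V^{L,\bm{\lambda}}$, which your direct argument does not see.
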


\begin{proof}
Assume that $L$ admits a diagram with $l$ arcs. Then the $\FI$-module $V^{L,\bm{\lambda}}$ is clearly a submodule of the $\FI$-module $C_l(c^\dt_{\bm{\lambda}};\Q)$ (see the proof of Theorem \ref{symhomfg}). The Noetherian property implies that $V^{L,\bm{\lambda}}$ is finitely generated.\\
\end{proof}

Note that it was shown in the above proof that $V^{L,\bm{\lambda}}$ is $(l\cdot|\bm{\lambda}|)$-small. Indeed, this follows from the work in the proof of Theorem \ref{symhomfg}. We will actually prove that $V^{L,\bm{\lambda}}$ is generated in degree $\leq l\cdot|\bm{\lambda}|$. To accomplish this, we will actually need a few more technical lemmas from the representation theory of categories. To start, we have the following.\\

\begin{definition}
The category $\FI_{\sharp}$ is that whose objects are the sets $[n] = \{1,\ldots, n\}$, and whose morphisms are partially defined injections. That is, an element $f \in \Hom_{\FI_{\sharp}}([n],[m])$ is an injection of sets $f:A \hookrightarrow [m]$, where $A \subseteq [n]$. There is a natural inclusion of categories $\FI \subseteq \FI_{\sharp}$, inducing a natural forgetful map between $\FI_{\sharp}$-modules and $\FI$-modules. Therefore, any $\FI_{\sharp}$-module can be considered as an $\FI$-module.\\
\end{definition}

In their work \cite{CEF}, Church, Ellenberg, and Farb famous proved the following structural theorem for $\FI_{\sharp}$-modules. It has since been expanded upon and worked into much deeper and abstract frameworks (see, for instance, \cite{LS}).\\

\begin{theorem}[Church, Ellenberg, \& Farb, \cite{CEF}]\label{sharpstructure}
Let $V$ be a finitely generated $\FI_{\sharp}$-module over a commutative ring $\Q$. Then,
\begin{enumerate}
\item $V$ is free and finitely generated when considered as an $\FI$-module (see Example \ref{free}).
\item If we write $V = \bigoplus_i M(W_i)$, where $W_i$ is a $\Sn_i$-representation, and $V'$ is an $\FI_{\sharp}$ submodule of $V$, then $V' = \bigoplus_i M(W'_i)$ where $W'_i$ is a subrepresentation of $W_i$. In particular, if $V$ is generated in degree $\leq d$, then the same is true of $V'$.
\item There exists a polynomial $p_V \in \Q[T]$ such that, for all $n \geq 0$, $p_V(n) = \dim_\Q V_n$. The degree of the polynomial $p_V$ is precisely the generating degree of $V$.\\
\end{enumerate}
\end{theorem}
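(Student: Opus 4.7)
The plan is to establish statement (1) as the main structural result, from which (2) and (3) will follow with comparatively little work. The core idea is to isolate, at each degree $m$, the ``new'' part $W_m$ of $V_m$ that does not arise from lower degrees, and then to show that the extra morphisms of $\FI_{\sharp}$ assemble the collection $\{W_m\}$ canonically into a direct sum of free $\FI$-modules.

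For (1), I would first define $V_n^{\mathrm{old}} \subseteq V_n$ as the $\Sn_n$-invariant subspace spanned by the images of all $\FI$-inclusions $V_m \to V_n$ with $m < n$, and set $W_n := V_n / V_n^{\mathrm{old}}$. The crucial additional structure in $\FI_{\sharp}$ is that for every subset $S \subseteq [n]$ there is a partial identity endomorphism $e_S \colon [n] \to [n]$, which $V$ sends to an idempotent operator on $V_n$ (with $e_{[n]} = \mathrm{id}$). Applying M\"obius inversion over the Boolean lattice of subsets, I would define $p_S := \sum_{T \subseteq S}(-1)^{|S|-|T|} e_T$; these should form a system of orthogonal idempotents summing to $\mathrm{id}_{V_n}$, and each $p_S$ projects $V_n$ onto a canonical copy of $W_{|S|}$ sitting over $S$. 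These canonical copies assemble into a natural isomorphism $V \cong \bigoplus_m M(W_m)$ of $\FI$-modules, once one verifies compatibility with arbitrary $\FI_{\sharp}$-morphisms. Finite generation of $V$ then forces all but finitely many $W_m$ to vanish and each non-zero $W_m$ to be finite-dimensional.

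For (2), the construction of $W_m$ and the idempotents $p_S$ is entirely canonical in the $\FI_{\sharp}$-action, so any $\FI_{\sharp}$-submodule $V' \subseteq V$ is preserved by the $p_S$ and hence decomposes as $V' \cong \bigoplus_m M(W'_m)$ with $W'_m \subseteq W_m$ an $\Sn_m$-subrepresentation. In particular, if $V$ is generated in degree $\leq d$ (so $W_m = 0$ for $m > d$), the same holds for $V'$. For (3), a direct count gives $\dim_{\Q} M(W)_n = \binom{n}{d} \dim_{\Q} W$ for any $\Sn_d$-representation $W$, which is polynomial in $n$ of degree exactly $d$ when $W \neq 0$. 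Summing over the decomposition in (1) yields $\dim_{\Q} V_n = \sum_m \binom{n}{m} \dim_{\Q} W_m$, a polynomial whose degree equals the largest $m$ with $W_m \neq 0$, namely the generating degree of $V$.

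The main obstacle is the canonical splitting in (1): verifying that the partial identity idempotents $e_S$ combine via inclusion-exclusion to produce an $\Sn_n$-equivariant decomposition of $V_n$ that is natural in \emph{all} morphisms of $\FI_{\sharp}$, not merely the honest injections in $\FI$. The combinatorial identities required (orthogonality of the $p_S$, correct action of honest injections sending $[|S|] \to [n]$ onto $S$, and compatibility with composition of partial injections) are the heart of the matter; once they are in hand, the assembly into $\bigoplus_m M(W_m)$ and the deductions of (2) and (3) are essentially formal.
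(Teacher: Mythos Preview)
The paper does not prove this theorem; it is stated with attribution to Church, Ellenberg, and Farb \cite{CEF} and invoked as a black box in the proof of Theorem~\ref{fiinv}. So there is no ``paper's own proof'' to compare against.

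That said, your outline is essentially the argument given in the original source \cite{CEF}. The key structural insight is exactly the one you identify: the partial-identity morphisms $e_S$ of $\FI_\sharp$ act as commuting idempotents on each $V_n$, and M\"obius inversion over the Boolean lattice of subsets of $[n]$ produces orthogonal idempotents $p_S$ summing to the identity. The image of $p_S$ is then canonically identified with the ``new'' piece in degree $|S|$, and naturality with respect to all $\FI_\sharp$-morphisms follows from the composition rule for partial injections. Parts (2) and (3) are, as you say, formal consequences.

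One small point to tidy up when you write out the details: you define $W_n$ as the quotient $V_n/V_n^{\mathrm{old}}$ but then speak of ``a canonical copy of $W_{|S|}$ sitting over $S$'' as a subspace of $V_n$. The passage between quotient and subspace is furnished precisely by the idempotent $p_{[n]}$, which splits the surjection $V_n \to W_n$; making this explicit will also make the verification of naturality cleaner.
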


This structure theorem is the main piece we need to prove the following.\\

\begin{theorem}\label{fiinv}
Let $\bm{\lambda} \subseteq \Par$ be a finite set of primitive partitions, and let $L$ be an oriented link. If $L$ admits a link diagram with $l$ arcs, then the $\FI$-module $V^{L,\bm{\lambda}}$ is a free-module generated in degree $\leq l \cdot |\bm{\lambda}|$.\\
\end{theorem}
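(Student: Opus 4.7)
The plan is to promote $V^{L,\bm{\lambda}}$ from an $\FI$-module to an $\FI_{\sharp}$-module and then invoke Theorem \ref{sharpstructure}. First I would extend the $\FI$-module $C_l(c^\dt_{\bm{\lambda}};\Q)$ constructed in the proof of Theorem \ref{symhomfg} to an $\FI_{\sharp}$-module. Given a partial injection $f:A \hookrightarrow [n]$ in $\FI_{\sharp}$ with $A \subseteq [m]$, and a basis tuple $(\sigma_1,\ldots,\sigma_l) \in C_l(c^m_{\bm{\lambda}};\Q)$, I would define
\[
f_\as(\sigma_1,\ldots,\sigma_l) = \begin{cases}(\tau_1,\ldots,\tau_l) & \text{if } \bigcup_j \text{supp}(\sigma_j) \subseteq A,\\ 0 & \text{otherwise,}\end{cases}
\]
where $\tau_j$ agrees with $f \circ \sigma_j \circ f^{-1}$ on $f(A)$ and fixes $[n] \setminus f(A)$. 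Because each $\sigma_j$ already fixes $[m]\setminus A$ and conjugation preserves cycle type, each $\tau_j$ lies in $c^n_{\bm{\lambda}}$. This recovers the existing $\FI$-action on total injections, and functoriality follows from the identity $\text{supp}(f_\as\sigma_j) = f(\text{supp}(\sigma_j))$ combined with the zero convention.

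Next I would verify that $V^{L,\bm{\lambda}}$, identified with its image in $C_l(c^\dt_{\bm{\lambda}};\Q)$ under a fixed ordering $x_1,\ldots,x_l$ of the arcs of a diagram of $L$, is an $\FI_{\sharp}$-submodule. A basis vector $\phi \in \Hom(\K(L), c^m_{\bm{\lambda}})$ corresponds to the tuple $(\phi(x_1),\ldots,\phi(x_l))$ subject to quandle relations of the form $\sigma_k \sigma_j \sigma_k^{-1} = \sigma_{j'}$ read off from the crossings. Assuming $\bigcup_j \text{supp}(\sigma_j) \subseteq A$, the assignment $\sigma \mapsto f_\as\sigma$ is an injective group homomorphism on the subgroup of $\Sn_m$ supported in $A$, so the transported tuple $(\tau_1,\ldots,\tau_l)$ still satisfies $\tau_k \tau_j \tau_k^{-1} = \tau_{j'}$ and determines a genuine element of $\Hom(\K(L), c^n_{\bm{\lambda}})$. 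Hence $V^{L,\bm{\lambda}}$ inherits an $\FI_{\sharp}$-submodule structure.

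Finally, the support bound used in the proof of Theorem \ref{symhomfg} shows that $C_l(c^\dt_{\bm{\lambda}};\Q)$ is generated as an $\FI$-module---hence, since $\FI \subseteq \FI_{\sharp}$, as an $\FI_{\sharp}$-module---in degree $\leq l \cdot |\bm{\lambda}|$. Applying part (2) of Theorem \ref{sharpstructure} to the $\FI_{\sharp}$-submodule $V^{L,\bm{\lambda}}$ yields the generating-degree bound $\leq l \cdot |\bm{\lambda}|$, and part (1) then gives freeness as an $\FI$-module. The main obstacle I anticipate is the first step: one must carefully check that the proposed $\FI_{\sharp}$-structure is genuinely functorial, which reduces to seeing that the ``zero when support escapes $A$'' convention is compatible with composition of partial injections, and that $V^{L,\bm{\lambda}}$ really is preserved---the latter using crucially that the defining relations of $\K(L)$ are conjugation identities, and therefore survive transport by $f_\as$.
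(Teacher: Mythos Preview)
Your proposal is correct and follows essentially the same approach as the paper: both extend $C_l(c^\dt_{\bm{\lambda}};\Q)$ to an $\FI_\sharp$-module, verify that $V^{L,\bm{\lambda}}$ is an $\FI_\sharp$-submodule preserved by the partial-injection action, and then invoke Theorem \ref{sharpstructure}. Your ``$\bigcup_j \mathrm{supp}(\sigma_j)\subseteq A$'' criterion for when a partial injection acts nontrivially is equivalent to the paper's ``$f_\as\sigma_j$ has the same cycle structure as $\sigma_j$ up to $1$-cycles'' criterion, and is arguably the cleaner formulation.
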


\begin{proof}
To begin, we will show that the $\FI$-module $C_l(c_{\bm{\lambda}}^\dt;\Q)$ (as defined in the proof of Theorem \ref{symhomfg}) can be extended to be an $\FI_{\sharp}$-module.\\

Let $f:A \rightarrow [m]$ be a morphism $f \in \Hom_{\FI_{\sharp}}([n],[m])$. We need to define
\[
f_\as:C_l(c_{\bm{\lambda}}^n;\Q) \rightarrow C_l(c_{\bm{\lambda}}^m;\Q),
\]
in such a way that it agrees with the normal $\FI$-module structure if $A = [n]$. Ideally, we would like to set the action of $f_\as$ on basis vectors as,
\[
f_\as(\sigma_1,\ldots,\sigma_l) = (f_\as\sigma_1,\ldots,f_\as\sigma_l),
\]
where
\[
f_\as\sigma = \begin{cases} f \circ \sigma \circ f^{-1}(i) &\text{ if $i \in \im(f)$}\\ i &\text{ otherwise.}\end{cases}
\]
However, if $A$ is a proper subset of $[n]$, there is no guarantee that $f_\as\sigma$ will have a cycle structure that places it in $c_{\bm{\lambda}}^m$. This can be easily fixed by just annihilating the entire basis vector in this case. Namely,
\[
f_\as(\sigma_1,\ldots,\sigma_l) = \begin{cases} (f_\as\sigma_1,\ldots,f_\as\sigma_l) &\text{ if $f_\as\sigma_j$ has the same cycle structure as $\sigma_j$, up to 1-cycles, for all $j$.}\\ 0 &\text{ otherwise.}\end{cases}
\]
It is easily checked that this assignment is well defined, and gives our desired $\FI_{\sharp}$-module structure.\\

To finish, we note that $V^{L,\bm{\lambda}}$ can be viewed as a submodule of $C_l(c^\dt_{\bm{\lambda}};\Q)$, spanned by tuples $(\sigma_1,\ldots,\sigma_l)$ which satisfy certain conjugacy relations between their members. It is clear that the above $\FI_{\sharp}$-structure will preserve such tuples. In particular, $V^{L,\bm{\lambda}}$ is an $\FI_{\sharp}$-submodule of $C_l(c^\dt_{\bm{\lambda}};\Q)$. The proof of Proposition \ref{fginv1} implies that $C_l(c^\dt_{\bm{\lambda}};\Q)$ is finitely generated in degree $\leq l\cdot|\bm{\lambda}|$. Theorem \ref{sharpstructure} implies that the same is true about $V^{L,\bm{\lambda}}$.\\
\end{proof}

\begin{example}
Let's compute the module $V^{L,\bm{\lambda}}$ for some small examples. First, let $L$ be the trefoil knot, oriented as in Figure \ref{arcs}, and let $\bm{\lambda} = \{(2)\}$. In other words, $c_{\bm{\lambda}}^n$ is the quandle of transpositions of $\Sn_n$. Theorem \ref{fiinv} implies that the entire module $V^{L,\bm{\lambda}}$ is determined by $V^{L,\bm{\lambda}}_n$, where $n = 0,\ldots,6$. In fact, we claim that this module is generated in degree $\leq 3$.\\

A basis vector of $V^{L,\bm{\lambda}}_n$ is the same as a triple $(\tau_x,\tau_y,\tau_z)$ of transpositions of $\Sn_n$, such that
\begin{eqnarray*}
\tau_x &=& \tau_z\tau_y\tau_z\\
\tau_y &=& \tau_x\tau_z\tau_x\\
\tau_z &=& \tau_y\tau_x\tau_y
\end{eqnarray*}
Solving these equations reveals that $(\tau_x\tau_y)^3 = 1$ and $\tau_z = \tau_y\tau_x\tau_y$. Therefore, either $\tau_x = \tau_y = \tau_z$, or $\tau_x = (ij), \tau_y = (jk),$ and $\tau_z = (ik)$ for some distinct $i,j,k$. It follows that
\[
\dim_\Q V^{L,\bm{\lambda}}_n = \binom{n}{2} + 2\binom{n}{2}(n-2).
\]
Theorem \ref{sharpstructure} implies that the module $V^{L,\bm{\lambda}}$ is generated in degree $\leq 3$. In fact, one can see that
\[
V^{L,\bm{\lambda}} = M(\text{triv}_2) \oplus M(W_3)
\]
where triv$_2$ is the trivial representation of $\Sn_2$, and $W_3$ is the permutation representation associated to the action of $\Sn_3$ on the set $\{((12),(23)),((12),(13)),((13),(12)),((13),(23)),((23),(12)),((23),(13))\}$.\\

The previous example illustrates that $V^{L,\bm{\lambda}}$ can be generated in degree $< l \cdot |\bm{\lambda}|$. However, this bound is also seen to be sharp in some examples. Let $L$ denote a pair of linked unknots, each of which is oriented clockwise. This is sometimes referred to as the Hopf link $2^2_1$. Then we have,
\[
\K(L) = \langle x,y \mid x \rhd y = x, y \rhd x = y\rangle.
\]
Keeping the same $\bm{\lambda}$ as the previous example, a basis vector of $V^{L,\bm{\lambda}}_n$ is a pair of commuting transpositions $(\tau_x,\tau_y)$. Transpositions only commute if they are identical, or if they are disjoint. Therefore,
\[
\dim_\Q V^{L,\bm{\lambda}}_n = \binom{n}{2} + \binom{n}{2}\binom{n-2}{2}.
\]
This is a polynomial of degree $4 = 2 \cdot 2 = l \cdot |\bm{\lambda}|$.\\

It therefore becomes an interesting question to ask whether there are more sophisticated methods by which one can compute the generating degree of $V^{L,\bm{\lambda}}$.\\
\end{example}

\begin{remark}
If $\bm{\lambda} = \{(2)\}$ then it is easily seen that $c^3_{\bm{\lambda}} = \Z/3\Z$, the dihedral quandle on three elements. In particular, the link invariant $V^{L,\bm{\lambda}}$ encodes the very classical Fox tricolor invariant of links. The computation above reveals that the Hopf link and the unknot have distinct $V^{L,\bm{\lambda}}$, despite having the same tricoloring number.\\

It is interesting to ask what other classical invariants (if any) are encoded by $V^{L,\bm{\lambda}}$.\\
\end{remark}

\begin{remark}
If $L$ is a knot, then it is easily seen that
\[
V^{L,\bm{\lambda}} = \oplus_{\lambda \in \bm{\lambda}} V^{L,\{\lambda\}}
\]
To see this, note that the relations in $\K(L)$ can be written $\alpha_i = \alpha_j \rhd \alpha_k$, where each $\alpha$ corresponds to an arc in a diagram of $L$. Every arc in this diagram will appear on the left hand side of this relation. Moreover, because knots only have one component by definition, given any arcs $\alpha,\beta$ there exists sequence of arcs $\alpha_1,\ldots,\alpha_c$ such that
\[
\alpha = \alpha_1 \rhd \alpha_2, \text{  } \alpha_1 = \alpha_3 \rhd \alpha_4, \text{  } \ldots \text{  }, \beta = \alpha_{c-1} \rhd \alpha_{c}
\]
In particular, if $L$ is a knot, then the image of any quandle homomorphism $\phi:\K(L) \rightarrow c^{n}_{\bm{\lambda}}$ must land entirely within a single conjugacy class.\\

All of the above implies that, for knots, it suffices to understand $V^{L,\{\lambda\}}$ for all primitive partitions $\lambda$.\\
\end{remark}

While we defined an infinite family of $\FI$-module invariants to each link, it is certainly the case that these modules can become very difficult to compute. One should therefore note that $\FI$-modules themselves have invariants of varying levels of computability, each of which can now be thought of as an invariant of the link. These include:

\begin{itemize}
\item The polynomial describing the dimension of $V^{L,\bm{\lambda}}_n$,
\item If $\bm{\lambda} = \{\lambda\}$ is a single partition, say of an integer $m$, then it can be shown that the polynomial describing the dimension of $V^{L,\bm{\lambda}}_n$ is divisible by the polynomial $|c^n_{\lambda}| = \binom{n}{m}|c^m_{\lambda}$. The quotient of the dimension polynomial by $|c^n_{\lambda}|$ can be thought of as a kind of normalization of the original dimension polynomial with respect to the unknot,
\item the generating degree of $V^{L,\bm{\lambda}}$,
\item writing $V^{L,\bm{\lambda}} = \bigoplus_i M(W_i)$, the multiplicities of the irreducible representations constituting the $W_i$,
\item For $n \gg 0$, the multiplicity of the trivial representation in $V^{L,\bm{\lambda}}_n$ is constant \cite{CEF}. This constant value is an invariant of the module.\\
\end{itemize}

\section{Some $\VIC(q)$-quandles}\label{vic}

For the remainder of this section, we will fix a finite field $F$ of order $q$. Note that many of the arguments in this section are very similar to those given in the previous section. For this reason we will often skip details in certain arguments.\\

\subsection{Definitions}
Just as before, we begin by reviewing the conjugacy classes of $GL_n(q)$. The picture here is analogous, although not nearly as simple.\\

Given a matrix $T \in GL_n(q)$, we obtain an action of $F[x]$ on $F^n$ by multiplication by $T$. The structure theorem for modules over a PID then implies
\[
F^n = \bigoplus_i F[x]/(f_i)^{e_i}
\]
where $e_i \geq 0$ is an integer, and $f_i$ is an irreducible polynomial. This decomposition uniquely determines the conjugacy class of the matrix $T$. We can encode this information in the following way.\\

\begin{definition}
Let $\Par$ be the set of partitions, and let Poly$(q)$ denote the set of monic irreducible polynomials over $F[x]$, not including the polynomial $x$. We will write $\Phi$ to denote a function of sets,
\[
\Phi:\text{Poly}(q) \rightarrow \Par,
\]
such that,
\[
\sum_{f \in \Phi}\sum_{i \geq 0} \deg(f)\cdot\Phi(f)_i = n.
\]
Any function $\Phi$, as above, corresponds to a conjugacy class of $GL_n(q)$ by setting
\[
F^n = \bigoplus_{f \in \text{Poly}(q)}\bigoplus_{i \geq 0} F[x]/(f)^{\Phi(f)_i}
\]
We will write $c_{\Phi}$ for this conjugacy class.\\

We say $c_{\Phi}$ is \textbf{primitive} if $\Phi(x-1)$ is primitive. We say that $\Phi$ is \textbf{primitive} if $c_\Phi$ is primitive. Given a primitive conjugacy class $c_\Phi$ of $GL_n(q)$, and an integer $m \geq n$, we define a new conjugacy class of $GL_{m}(q)$ by defining
\[
\Phi^{m-n}(f) := \begin{cases} \Phi(f) &\text{ if $f \neq x-1$}\\ \Phi(x-1)^{m-n} &\text{ otherwise.}\end{cases}
\]
and setting
\[
c^m_{\Phi} := c_{\Phi^{m-n}}.
\]
Note that if $m < n$ then $c^m_\Phi$ is the empty set, by definition.\\
\end{definition}

If $c_\Phi$ is a conjugacy class of $GL_n(q)$, then we think of $c_{\Phi}^m$ as the conjugacy class of $GL_m(q)$ obtained from $c_\Phi$ by adding $m-n$ linearly independent eigenvectors for 1. This is analogous to the symmetric group case, which involved adding 1-cycles to our cycle decomposition.\\

\begin{definition}
Let $\bm{\Phi}$ denote a finite set of primitive maps $\Phi:\text{Poly}(q) \rightarrow \Par$. Then for each $n$ we set $c^n_{\bm{\Phi}}$ to be the quandle
\[
\cup_{\Phi \in \bm{\Phi}} c^n_\Phi \subseteq GL_n(q)
\]
Let $(f,W):F^m \rightarrow F^n$ is a $\VIC(q)$ morphism, so that any element of $F^n$ can be written uniquely as $v = f(v_1) + v_2$ where $v_1 \in F^m$ and $v_2 \in W$. If $T \in c^m_{\bm{\Phi}}$, then we define $(f,W)_\as T \in GL_n(q)$ to be the matrix
\[
((f,W)_\as T)(v) = f \circ T(v_1) + v_2
\]
It is easily checked that $(f,W)_\as T$ is an element of $c^n_{\bm{\Phi}}$, and that this extends the conjugacy action of $GL_n(q)$ on $c^n_{\bm{\Phi}}$. In particular, the assignment $n \mapsto c^n_{\bm{\Phi}}$ turns $c^\dt_{\bm{\Phi}}$ into a $\VIC(q)$-quandle.\\
\end{definition}

\begin{remark}
The definition of $(f,W)_\as$ is the main inspiration for the category $\VIC(q)$. Namely, by specifying the compliment of the image of $f$, one naturally obtains a way to use $f$ to map between $GL_m(q)$ and $GL_n(q)$.\\
\end{remark}

\subsection{Proving that the homology is finitely generated}

In this section, we will prove that the quandle and rack homology groups of $c^\dt_{\bm{\Phi}}$ are finitely generated $\VIC(q)$-modules over any Noetherian ring. For the remainder of this section, we fix a finite set $\bm{\Phi}$ of primitive functions $\Phi:\text{Poly}(q) \rightarrow \Par$.\\

If $c_\Phi$ is a conjugacy class of $GL_m(q)$, then we write $|\Phi| = m$. If $\bm{\Phi}$ is as above, we write $|\bm{\Phi}| = \max_{\Phi \in \bm{\Phi}}|\Phi|$.\\

\begin{theorem}\label{symlinfg}
Let $\bm{\Phi}$ be a finite set of primitive functions $\Phi:\text{Poly}(q) \rightarrow \Par$, let $i \geq 0$ be a fixed integer, and let $k$ be a commutative Noetherian ring. Then the $\VIC(q)$-modules $H^R_i(c^\dt_{\bm{\Phi}};k)$ and $H^Q_i(c^\dt_{\bm{\Phi}};k)$ are finitely generated. Moreover, both $H^R_i(c^\dt_{\bm{\Phi}};k)$ and $H^Q_i(c^\dt_{\bm{\Phi}})$ are $(i\cdot |\bm{\Phi}|)$-small.
\end{theorem}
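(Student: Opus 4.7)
My plan is to mimic the proof of Theorem~\ref{symhomfg} in the $\VIC(q)$ setting, with fixed lines of matrices replacing fixed points of permutations and cyclic decompositions of $F^n$ replacing cycle decompositions. The three ingredients are (i) upgrading the chain complex $C_\dt(c^\dt_{\bm{\Phi}};k)$ to a complex of $\VIC(q)$-modules, (ii) bounding its generation degree by $i\cdot|\bm{\Phi}|$, and (iii) invoking the Putman--Sam Noetherian property. Step (i) is straightforward: the componentwise action of $\VIC(q)$ on $(c^\dt_{\bm{\Phi}})^i$ $k$-linearizes to the desired module structure on $C_i$, and the differential $\partial$, being built from coordinate deletions and applications of the quandle operation, commutes with the $\VIC(q)$-action, so $C_\dt$ (and its quandle quotient $C^Q_\dt$) is a complex of $\VIC(q)$-modules.

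Step (ii) is the central task. For this, I would check that for $n > i\cdot|\bm{\Phi}|$ every basis tuple $(T_1,\ldots,T_i) \in (c^n_{\bm{\Phi}})^i$ is the image under some $(f,W)_\as$ of a basis tuple in $(c^{n-1}_{\bm{\Phi}})^i$. The key dimension count is that, because each $\Phi_j$ is primitive, the cyclic decomposition of $F^n$ under $T_j$ contains at least $n - |\Phi_j|$ summands isomorphic to $F[x]/(x-1)$ coming from the ``added'' identity part, so $\ker(T_j-1)$ has codimension at most $|\Phi_j| \leq |\bm{\Phi}|$ inside $F^n$. Intersecting, $V := \bigcap_j \ker(T_j-1)$ has dimension at least $n - i\cdot|\bm{\Phi}| \geq 1$, yielding a common fixed line.

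The main obstacle---where the $\VIC(q)$ case departs genuinely from the $\FI$ case---is upgrading a common fixed line $W \subseteq V$ to a decomposition $F^n = U \oplus W$ in which every $T_j$ preserves both $U$ and $W$ and $T_j|_U$ lies in $c^{n-1}_{\Phi_j}$. This requires $W$ to be ``extractable'' as a trivial $F[x]/(x-1)$ summand of the cyclic decomposition of each $T_j$, equivalently that $W \not\subseteq (T_j-1)F^n$ for every $j$. The bad subspaces $V \cap (T_j-1)F^n$ all lie inside $\ker(T_j-1) \cap (T_j-1)V_{j,x-1}$, whose dimension is bounded by the length of the primitive partition $\Phi_j(x-1)$; a point-counting argument over $\F_q$ produces a line $W \subseteq V$ avoiding all of them once $n > i\cdot|\bm{\Phi}|$, and a common invariant complement $U$ is then assembled summand by summand from the primary decompositions for each $T_j$. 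Choosing any isomorphism $f: F^{n-1} \xrightarrow{\sim} U$ and setting $T'_j := f^{-1}\, T_j|_U\, f$ realizes $(T_1,\ldots,T_i)$ as $(f,W)_\as(T'_1,\ldots,T'_i)$, completing the generation bound.

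With the generation bound in hand, step (iii) is immediate: the Putman--Sam Noetherian property gives that $H^R_i(c^\dt_{\bm{\Phi}};k)$ and $H^Q_i(c^\dt_{\bm{\Phi}};k)$, as subquotients of $\VIC(q)$-modules finitely generated in degree $\leq i\cdot|\bm{\Phi}|$, are themselves finitely generated, and $(i\cdot|\bm{\Phi}|)$-smallness is then immediate from the definition. I expect the hard part to be the careful construction of the common invariant complement $U$---particularly handling the point-counting over small $\F_q$ and patching the per-$T_j$ complements into a single $U$; every other step transfers essentially verbatim from the symmetric-group argument, which is presumably why the author is comfortable ``skipping details.''
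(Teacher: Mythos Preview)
Your proposal is more careful than the paper's own proof, which simply establishes (via the intersection-of-eigenspaces dimension count you also perform) that a common $1$-eigenvector exists when $n>i|\bm{\Phi}|$ and then stops, appealing to the analogy with Theorem~\ref{symhomfg}. You are right that the $\VIC(q)$ case is genuinely harder here: for a basis tuple $(T_1,\ldots,T_i)$ to lie in the image of some $(f,W)_\ast$ one needs not only a common fixed line $W$ but also a common $T_j$-invariant complementary hyperplane $U=\im(f)$, and the paper does not address this.

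That said, your own step (b)---``assembling'' a common complement $U$ summand by summand from the per-$T_j$ primary decompositions---is itself a gap. Even when $Fw$ splits off as an $F[x]$-summand for each $T_j$ separately, the individual complements need not coincide, and there is no evident mechanism for patching them into a single $U$ invariant under all $T_j$ simultaneously; the primary decompositions for different $T_j$ live in different coordinate systems and do not interact. A cleaner route is to reverse the order: first produce a common invariant hyperplane, then choose $W$. Any hyperplane $U\supseteq R:=\sum_j(T_j-1)F^n$ is automatically $T_j$-invariant for every $j$ (since $T_ju=u+(T_j-1)u\in U+R\subseteq U$), and $\dim R\le\sum_j\dim\im(T_j-1)\le i|\bm{\Phi}|$, so such hyperplanes exist once $n>i|\bm{\Phi}|$. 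It then remains to choose $U$ with $E\not\subseteq U$, so that a complementary line $W\subseteq E$ exists; this is equivalent to $E\not\subseteq R$. The dimension bounds give $\dim E>\dim R$ outright once $n>2i|\bm{\Phi}|$, which already yields finite generation and $(2i|\bm{\Phi}|)$-smallness without any point-counting or patching; recovering the paper's sharper constant $i|\bm{\Phi}|$ would require a further argument that $E\not\subseteq R$ in the narrower range.
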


\begin{proof}
The proof of this theorem is very similar to the proof for Theorem \ref{symhomfg}. In this case, we must show that if $n > i\cdot |\bm{\Phi}|$ then any $i$-tuple $(T_1,\ldots,T_i)$ of elements in $c^n_{\bm{\Phi}}$ has a common eigenvector for 1.\\

Let $E_j$ denote the 1-eigenspace for $T_j$. By how $c^n_{\bm{\Phi}}$ is defined, we know that $\dim_F E_j \geq n-|\bm{\Phi}|$ for each $j$. Using standard dimension formulas we know that
\[
n \geq \dim_F (E_1 + E_2) = \dim_F E_1 + \dim_F E_2 - \dim_F (E_1 \cap E_2) \geq 2n - 2|\bm{\Phi}| - \dim_F (E_1 \cap E_2).
\]
Therefore,
\[
\dim_F(E_1 \cap E_2) \geq n - 2|\bm{\Phi}|.
\]
Proceeding by induction we conclude,
\[
\dim_F(E_1 \cap E_2 \cap \ldots \cap E_i) \geq n - i|\bm{\Phi}|>0.
\]
This concludes the proof.\\
\end{proof}

Once again we can use this result, along with Theorem \ref{maincor} To conclude non-trivial facts about these homology groups.\\

\begin{corollary}\label{lincor}
Let $\bm{\Phi}$ be a finite set of primitive functions $\Phi:\text{Poly}(q) \rightarrow \Par$, let $i \geq 0$ be a fixed integer, and let $k$ be a commutative Noetherian ring. For each $n$, let $\ai_{R,i,n},\ai_{Q,i,n} \subseteq k$ be the ideals generated by non-zero-divisors which annihilate $H^R_i(c^n_{\bm{\Phi}};k)$ and $H^Q_i(c^n_{\bm{\Phi}};k)$, respectively. Then for $n \gg 0$, $\ai_{R,i,n}$ and $\ai_{Q,i,n}$ are independent of $n$. In particular, if $k = \Z$, then there exist integers $e_{R,i}$ and $e_{Q,i}$, independent of $n$, such that $e_{R,i}$ and $e_{Q,i}$ are the exponents of the abelian groups $H^R_i(c^n_{\bm{\Phi}};k)$, and $H^Q_i(c^\dt_{\bm{\Phi}})$, respectively, for $n \gg 0$.\\
\end{corollary}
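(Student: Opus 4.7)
The plan is to follow the same two-step pattern that produced Corollary \ref{symcor} from Theorem \ref{symhomfg}, simply swapping the $\FI$-input for its $\VIC(q)$-analog. First I would invoke Theorem \ref{symlinfg}, which already asserts that $H^R_i(c^\dt_{\bm{\Phi}};k)$ and $H^Q_i(c^\dt_{\bm{\Phi}};k)$ are $(i\cdot|\bm{\Phi}|)$-small $\VIC(q)$-modules over $k$. This places them squarely within the hypotheses of Theorem \ref{maincor}.

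Next I would apply part (3) of Theorem \ref{maincor} to each of these two $\VIC(q)$-modules in turn. That statement was formulated uniformly for $\C \in \{\FI,\VIC(q)\}$ and any $d$-small $\C$-module over a Noetherian ring: the ideals $\ai_n \subseteq k$ generated by non-zero-divisors annihilating $V_n$ stabilize for $n \gg 0$. Applied to $V = H^R_i(c^\dt_{\bm{\Phi}};k)$ it produces an eventually constant ideal $\ai_{R,i,n}$, and applied to $V = H^Q_i(c^\dt_{\bm{\Phi}};k)$ it produces the eventually constant $\ai_{Q,i,n}$, which is exactly the first assertion. The specialization to $k = \Z$ is then immediate: every nonzero integer is a non-zero-divisor, so the relevant ideal in $\Z$ is generated by the exponent of the abelian group $V_n$, and stability of the ideal forces stability of the exponent, giving the claimed integers $e_{R,i}$ and $e_{Q,i}$.

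The only thing one might worry about is that some $\FI$-specific step is hiding inside the proof of Theorem \ref{maincor}(3), but inspecting that proof shows it relies solely on the Noetherian property of $\C$-modules, together with the formal observation that torsion elements form a submodule. The Noetherian property for $\VIC(q)$ over a Noetherian coefficient ring was established by Putman--Sam, so there is no genuine obstacle. In this sense the statement is a direct corollary: the real content sits in Theorem \ref{symlinfg} (setting up the $\VIC(q)$-module structure and proving $(i\cdot|\bm{\Phi}|)$-smallness via the common $1$-eigenvector argument) and in Theorem \ref{maincor}(3), and the current corollary is simply the composition of the two.
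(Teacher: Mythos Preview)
Your proposal is correct and matches the paper's approach exactly: the paper does not even write out a proof of this corollary, merely remarking that ``once again we can use this result, along with Theorem~\ref{maincor}'' immediately after Theorem~\ref{symlinfg}. Your unpacking of why Theorem~\ref{maincor}(3) applies in the $\VIC(q)$ setting (via the Putman--Sam Noetherian property) is accurate and, if anything, more detailed than what the paper provides.
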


\subsection{Some $\VIC(q)$-module invariants of links}

To conclude, we consider $\VIC(q)$-modules associated to links. Unfortunately, these invariants will not prove to be as easily computable as in the $\FI$ case. This is due to the (relative) lack of structure theorems for $\VIC(q)$-modules as compared to $\FI$-modules. In any case, we have the following.\\

\begin{definition}
Let $\bm{\Phi}$ be a finite collection of primitive functions $\Phi:\text{Poly}(q) \rightarrow \Par$, and let $L$ be an oriented link. Then we define a rational $\VIC(q)$-module $V^{L,\bm{\Phi}}$ by the assignments
\[
V^{L,\bm{\Phi}}_n := \Q[\Hom(\K(L),c^n_{\bm{\Phi}}],
\]
the $\Q$ vector space with basis indexed by the set $\Hom(\K(L),c^n_{\bm{\Phi}})$.\\
\end{definition}

In the case of $\FI$-modules, we were able to prove strong theorems about the modules $V^{L,\bm{\lambda}}$ by appealing to the extra $\FI_{\sharp}$-module structure. There are no known analogous constructions for $\VIC(q)$-modules. In fact, structure theorems for $\VIC(q)$-modules, and representations of related categories, are still a very active field of research (see \cite{PS,GW,SS}, for example).\\

The following theorem is proven in almost the exact same way as Proposition \ref{fginv1}.\\

\begin{theorem}\label{fginv2}
Let $\bm{\Phi}$ be a finite collection of primitive functions $\Phi:\text{Poly}(q) \rightarrow \Par$, and let $L$ be an oriented link. If $L$ admits a diagram with $l$ arcs, then the $\VIC(q)$-module $V^{L,\bm{\Phi}}$ is $(l\cdot |\bm{\Phi}|)$-small. In particular, $V^{L,\bm{\Phi}}$ is finitely generated, and there exists a polynomial $p_{L,\bm{\Phi}}(T) \in \Q[T]$ of degree $\leq l\cdot |\bm{\Phi}|$ such that for all $n \gg 0$,
\[
\dim_\Q V^{L,\bm{\Phi}}_n = p_{L,\bm{\Phi}}(q^n)
\]
\end{theorem}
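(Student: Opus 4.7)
The plan is to follow the template already set by Proposition \ref{fginv1} and Theorem \ref{symlinfg}: realize $V^{L,\bm{\Phi}}$ as a $\VIC(q)$-submodule of a chain module which is visibly finitely generated in degree $\leq l\cdot |\bm{\Phi}|$, then invoke Noetherianity and the polynomial-growth half of Theorem \ref{maincor}.

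First, I would recall the chain module $C_l(c^{\bullet}_{\bm{\Phi}};\Q)$ constructed in the proof of Theorem \ref{symlinfg}: its value on $F^n$ is the free $\Q$-vector space on $l$-tuples $(T_1,\ldots,T_l) \in (c^n_{\bm{\Phi}})^l$, with $\VIC(q)$-action induced componentwise from the action on $c^{\bullet}_{\bm{\Phi}}$ by conjugation-plus-identity on the complementary subspace. The proof of Theorem \ref{symlinfg} establishes, via the elementary dimension count on common $1$-eigenspaces, that whenever $n > l\cdot |\bm{\Phi}|$ the tuple $(T_1,\ldots,T_l)$ admits a common $1$-eigenvector, and hence is in the image of the $\VIC(q)$-map from some tuple on a smaller space. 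This exhibits $C_l(c^{\bullet}_{\bm{\Phi}};\Q)$ as finitely generated in degree $\leq l\cdot |\bm{\Phi}|$.

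Next, given a diagram of $L$ with $l$ arcs $\alpha_1,\ldots,\alpha_l$, a quandle homomorphism $\phi\colon \K(L)\to c^n_{\bm{\Phi}}$ is determined by the tuple $(\phi(\alpha_1),\ldots,\phi(\alpha_l))$, so the assignment $\phi \mapsto (\phi(\alpha_1),\ldots,\phi(\alpha_l))$ realizes $V^{L,\bm{\Phi}}_n$ as the $\Q$-span of those tuples satisfying the finitely many conjugation relations coming from crossings of $D(L)$. Because the $\VIC(q)$-action on $C_l$ preserves each such conjugation relation (the maps $(f,W)_{\ast}$ are group homomorphisms on the subgroup generated by the image together with the identity on $W$), this subspace is a $\VIC(q)$-submodule. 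Thus $V^{L,\bm{\Phi}}$ is a submodule of $C_l(c^{\bullet}_{\bm{\Phi}};\Q)$, and by definition $(l\cdot |\bm{\Phi}|)$-small.

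Finally, Noetherianity of $\VIC(q)$-modules over $\Q$ (Putman--Sam) upgrades $d$-smallness to genuine finite generation of $V^{L,\bm{\Phi}}$, and part (2) of Theorem \ref{maincor}, applied with $\C=\VIC(q)$ and $d=l\cdot |\bm{\Phi}|$, yields a polynomial $p_{L,\bm{\Phi}}(T)\in\Q[T]$ of degree $\leq l\cdot |\bm{\Phi}|$ with $\dim_\Q V^{L,\bm{\Phi}}_n = p_{L,\bm{\Phi}}(q^n)$ for $n\gg 0$. The only genuinely delicate step is verifying that the conjugation relations defining $V^{L,\bm{\Phi}}_n\subseteq C_l$ really are preserved by the maps $(f,W)_\ast$; this is where the absence of an $\FI_\sharp$-style structure on the $\VIC(q)$ side shows up, and is the reason we cannot extract a free decomposition as in Theorem \ref{fiinv}. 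However, for preservation of the relations themselves it suffices to observe that $(f,W)_\ast$ agrees with ordinary conjugation on the image $f(F^m)$ and is the identity on $W$, so any identity of the form $T_i = T_j T_k T_j^{-1}$ involving elements of $c^m_{\bm{\Phi}}$ is transported to the same identity among the $(f,W)_\ast$ images in $c^n_{\bm{\Phi}}$.
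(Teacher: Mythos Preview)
Your proposal is correct and follows essentially the same approach as the paper, which simply remarks that the theorem is proven in almost the exact same way as Proposition~\ref{fginv1}: embed $V^{L,\bm{\Phi}}$ into the chain module $C_l(c^{\bullet}_{\bm{\Phi}};\Q)$ shown in Theorem~\ref{symlinfg} to be generated in degree $\leq l\cdot|\bm{\Phi}|$, then apply Noetherianity and Theorem~\ref{maincor}. You supply more detail than the paper does, in particular the verification that $(f,W)_\ast$ is a group homomorphism $GL_m(q)\to GL_n(q)$ and hence preserves the crossing relations, which the paper leaves implicit.
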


\newpage


\begin{thebibliography}{aaaa}
\small
\bibitem[Ca]{Ca} K. Casto, \textit{$\FI_G$-modules and Arithmetic Statistics}, \arXiv{1703.07295}.
\bibitem[Ch]{Ch} T. Church, \textit{Homological stability for configuration spaces of manifolds}, 33 pages Inventiones Mathematicae 188 (2012) 2, 465–504, \arXiv{1103.2441}.
\bibitem[Cl]{Cl} F. J. B. J. Clauwens, \textit{The algebra of rack and quandle cohomology}, J. Knot Theory Ramifications, 20, 1487 (2011). 
\bibitem[Cl2]{Cl2} F. J. B. J. Clauwens, \textit{Small connected quandles}, \arXiv{1011.2456}.
\bibitem[CEF]{CEF} T. Church, J.\,S. Ellenberg and B. Farb, \textit{$\FI$-modules and stability for representations of symmetric groups}, Duke Math. J. 164, no. 9 (2015), 1833-1910.
\bibitem[CEGS]{CEGS} J. S. Carter, M. Elhamdadi, M. Gra\~na, and M. Saito, \textit{Cocycle Knot Invariants from Quandle Modules and Generalized Quandle Cohomology}, Osaka Journal of Mathematics 42(3), (2003).
\bibitem[CEFN]{CEFN} T. Church, J.\,S. Ellenberg, B. Farb, and R. Nagpal, \textit{$\FI$-modules over Noetherian rings}, Geom. Topol. 18 (2014) 2951-2984.
\bibitem[CESY]{CESY} W. E. Clark, M. Elhamdadi, M. Saito, and T. Yeatman, \textit{Quandle Colorings of Knots and Applications}, J. Knot Theory Ramifications 23, (2014).
\bibitem[CJKLS]{CJKLS} J. S. Carter, D. Jelsovsky, S. Kamada, L. Langford, and M. Saito, \textit{Quandle Cohomology and State-Sum Invariants of Knotted Curves and Surfaces}, Trans. Amer. Math. Soc. 355, 3947–3989.
\bibitem[CKS]{CKS} J. S. Carter, S. Kamada, and M. Saito, \textit{Geometric Interpretations of Quandle Homology}, J. Knot Theory Ramifications 10, 345 (2001).
\bibitem[D]{D} A. Djament, \textit{Sur l’homologie des groupes unitaires \'a coefficients polynomiaux}, J. K-Theory 10 (2012), no. 1, 87–139.
\bibitem[EG]{EG} P. Etingof, and M. Gra\~na, \textit{On rack cohomology}, Journal of pure and applied algebra 177.1 (2003), 49-59.
\bibitem[EK]{EK} M. Elhamdadi, and J. Kerr, \textit{Linear Alexander quandle colorings and the minimum number of colors}, Knots, Links, Spatial Graphs, and Algebraic Invariants 689, 7, (2017).
\bibitem[EN]{EN} M. Elhamdadi, and S. Nelson, \textit{Quandles An Introduction to the Algebra of Knots}, Student Mathematical Library 74, American Mathematical Society, Providence, RI, 2015.
\bibitem[EW-G]{EW-G} J. S. Ellenberg and J. D. Wiltshire-Gordon, \textit{Algebraic structures on cohomology of configuration spaces of manifolds with flows}, \arXiv{1508.02430}.
\bibitem[FRS]{FRS} R. Fenn, C. Rourke, and B. Sanderson, \textit{James bundles and applications}, Proc.
London Math. Soc. (3) 89, no. 1 (2004) 217-240.
\bibitem[G]{G} N. Gadish, \textit{Categories of $\FI$ type: a unified approach to generalizing representation stability and character polynomials} J. Algebra, 480 (2017), pp. 450-486.
\bibitem[GW]{GW} W. L. Gan, and J. Watterlond, \textit{Stable decompositions of certain representations of the finite general linear groups}, \arXiv{1605.08434}.
\bibitem[HMN]{HMN} R. Henderson, T. Macedo, S. Nelson, \textit{Symbolic computation with finite quandles}, J. Symbolic Comput. 41 (2006) 811-817.
\bibitem[HR]{HR} P. Hersh, and V. Reiner, \textit{Representation stability for cohomology of configuration spaces in $\R^d$}, \arXiv{1505.04196}.
\bibitem[IK]{IK} A. Inoue, and Y. Kabaya, \textit{Quandle homology and complex volume}, Y. Geom Dedicata (2014) 171.
\bibitem[J]{J} D. Joyce, \textit{A classifying invariant of knots; the knot quandle}, J. Pure Appl. Alg. 23 (1982), 37-65.
\bibitem[LN]{LN} R. A. Litherland, S. Nelson, \textit{The Betti numbers of some finite racks}, J. Pure Appl. Algebra 178 (2003) 187-202.
\bibitem[LS]{LS} S. Lack, and R. Street, \textit{Combinatorial categorical equivalences of Dold-Kan type} Journal of Pure and Applied Algebra, 219.10, (2015).
\bibitem[LR]{LR} L. Li, and E. Ramos, \textit{Depth and the Local Cohomology of $\FI_G$-modules}, \arXiv{1602.04405}.
\bibitem[LY]{LY} L. Li, and N. Yu, \textit{$\FI^m$-modules over Noetherian rings}, \arXiv{1705.00876}.
\bibitem[MW]{MW} J. Miller and J. Wilson, \textit{Higher order representation stability and ordered configuration spaces of manifolds}, \arXiv{1611.01920}.
\bibitem[NP]{NP} M. Niebrzydowski, J. H. Przytycki, \textit{Homology of dihedral quandles}, J. Pure Appl. Algebra 213 (2009) 742-755.
\bibitem[P]{P} J. H. Przytycki, \textit{3-coloring and other elementary invariants of knots}, Banach Center Publications 42.1 (1998): 275-295.
\bibitem[PS]{PS} A. Putman, and S. Sam, \textit{Representation stability and finite linear groups},Duke Math J., To appear, \arXiv{1408.3694}.
\bibitem[PY]{PY} N. Proudfoot, and B. Young, \textit{Configuration spaces, FS$^{\text{op}}$-modules, and Kazhdan-Lusztig polynomials of braid matroids}, \url{http://pages.uoregon.edu/njp/fs-braid.pdf}.
\bibitem[R]{R} E. Ramos, \textit{Homological Invariants of $\FI$-Modules and $\FI_G$-Modules}, \arXiv{1511.03964}.
\bibitem[Sa1]{Sa1} S. Sam, \textit{Syzygies of bounded rank symmetric tensors are generated in bounded degree}, Math. Ann., to appear. \arXiv{1608.01722}.
\bibitem[Sa2]{Sa2} S. Sam, Steven V Sam, \textit{Ideals of bounded rank symmetric tensors are generated in bounded degree}, Invent. Math. 207 (2017), no. 1, 1–21, \arXiv{1510.04904}.
\bibitem[Sn]{Sn} A. Snowden, \textit{Syzygies of Segre embeddings and $\Delta$-modules}, Duke Math. J. 162 (2013), no. 2, 225-277, \arXiv{1006.5248}.
\bibitem[St]{St} A. Stacey, \url{http://tex-talk.net/2013/04/how-can-i-draw-a-knot-in-tex-let-me-count-the-ways/}
\bibitem[SS]{SS} S. Sam, and A. Snowden, \textit{Gr\"obner methods for representations of combinatorial categories}, J. Amer. Math. Soc. 30 (2017), 159-203, \arXiv{1409.1670}.
\bibitem[SS2]{SS2} S. Sam and A. Snowden, \textit{Representations of categories of $G$-maps}, Journal für die reine und angewandte Mathematik (Crelles Journal), To appear, \arXiv{1410.6054}.
\bibitem[T]{T} N. Takahashi, \textit{Quandle Varieties, Generalized Symmetric Spaces, and $\varphi$-Spaces}, Transformation Groups (2016) 21: 555. 
\bibitem[W]{W} J. Wilson, \textit{$\FI_W$-modules and stability criteria for representations of the classical Weyl groups}, J. Algebra 420 (2014), 269–332.

\end{thebibliography}
\end{document}